\title{Codimension one regular foliations on rationally connected threefolds}
\author{Jo\~ao Paulo Figueredo}
\date{October 2021}
\newtheorem{thm}{Theorem}[section]
\newtheorem{conj}[thm]{Conjecture}
\newtheorem{lem}[thm]{Lemma}
\newtheorem{cor}[thm]{Corollary}
\newtheorem{prop}[thm]{Proposition}
\theoremstyle{definition}
\newtheorem{defi}[thm]{Definition}
\newtheorem{rmk}[thm]{Remark}
\begin{document}

\maketitle

\begin{abstract}
    In his work on birational classification of foliations on projective surfaces, Brunella showed that every regular foliation on a rational surface is algebraically integrable with rational leaves. This led Touzet to conjecture that every regular foliation on a rationally connected manifold is algebraically integrable with rationally connected leaves. Druel proved this conjecture for the case of weak Fano manifolds. In this paper, we extend this result showing that Touzet's conjecture is true for codimension one foliations on threefolds with nef anti-canonical bundle.

\vspace{0.5cm}

\noindent\textbf{Keywords}: Regular foliation, holomorphic foliation, foliated MMP, rationally connected manifolds \\ \textbf{Classification}: 14M22 (Primary), 37F75 (Secondary)
\end{abstract}

\section{Introduction}

A singular holomorphic foliation on a normal complex variety $X$ is defined by a coherent subsheaf $\mathcal{F} \subset T_X$ which is closed under the Lie bracket and such that $T_X/\mathcal{F}$ is torsion free. There exists an open subset $X_0$ of the regular locus of $X$ over which $\mathcal{F}$ is a subbundle of $T_X$. The rank of $\mathcal{F}$ is defined as the rank of $\mathcal{F}_{|X_0}$. We say that $\mathcal{F}$ is a regular foliation if $X_0 = X$.

By the classical Frobenius integrability theorem (theorem \ref{thm:frobenius}), $\mathcal{F}$ defines a decomposition of $X_0$ into a disjoint union of complex immersed submanifolds, having dimension equal to the rank of $\mathcal{F}$, such that locally analytically around each point $x \in X_0$, these submanifolds are the fibers of a submersion $f\colon U \rightarrow V$, where $\dim(V) = \dim(X) - r$. These immersed submanifolds are called the leaves of the foliation $\mathcal{F}$. We will say that $\mathcal{F}$ is algebraically integrable if for every leaf $F \subset X$ of $\mathcal{F}$, we have $\dim(F) = \dim(\overline{F}^{\rm Zar})$, where $\overline{F}^{\rm Zar}$ is the Zariski closure of $F$ in $X$.

As an example of a foliation, consider a vector field $v \in H^0(X,T_X)$, vanishing in at most a finite set of points. Then the sheaf generated by $v$ defines a foliation of rank one on $X$. The leaves are obtained by solving the ordinary differential equation $\gamma'(t) = v(\gamma(t))$. Thus, to determine whether a foliation of rank one is algebraically integrable is equivalent to determine whether the solutions of an ordinary differential equation are algebraic.

Naturally associated to a foliation $\mathcal{F}$ there is a canonical sheaf $K_\mathcal{F} = \det(\mathcal{F}^*)$. Conjecturally, the numerical properties of the canonical sheaf of a foliation $\mathcal{F}$ governs the geometry of $\mathcal{F}$, just like the canonical sheaf of a projective variety $X$ governs the geometry of $X$. In particular, there is a conjectural MMP for foliations (using $K_\mathcal{F}$ in the place of $K_X$), currently established if $\dim(X) \leq 3$ (\cite{brunella1999minimal}, \cite{mcquillan}, \cite{Spicer2017}, \cite{Cascini2018}, \cite{spicer2019local}, \cite{cascini2020mmp}). 

This foliated Minimal Model Program is significantly simplified for regular foliations of codimension $1$ (\cite{Spicer2017}). If $\dim(X) = 2$ and $\mathcal{F}$ is a regular foliation of rank $1$ on $X$, then either $K_\mathcal{F}$ is nef or $\mathcal{F}$ is induced by a rational fibration (hence algebraically integrable). If $\dim(X) = 3$ and $\mathcal{F}$ a regular foliation of codimension $1$ on $X$, then after a sequence of smooth blow-downs we obtain a pair $(Y,\mathcal{G})$ where $Y$ and $\mathcal{G}$ are regular, and either $K_\mathcal{G}$ is nef, or $Y$ is a Mori Fiber Space with $K_\mathcal{G}$-negative fibers tangent to $\mathcal{G}$.

In this paper we use this MMP to classify regular foliations of codimension $1$ on $3$-folds. To illustrate this method, let $X$ be a rational surface. If $\mathcal{F}$ is not algebraically integrable, then $K_\mathcal{F}$ is nef. By Bott's vanishing theorem (see theorem \ref{thm:baumbott}), we have $c_1(T_X/\mathcal{F})^2 = c_2(T_X/\mathcal{F}) = 0$, and this implies that $c_1(T_X)^2 - c_2(T_X) \geq 0$. Thus  $X$ has to be a Hirzebruch surface, and we conclude the following:

\begin{thm}[\cite{brunellasurfaces}]
Let $X$ be a projective rational surface and let $\mathcal{F}$ be a regular foliation of rank $1$ on $X$. Then $\mathcal{F}$ is induced by a smooth morphism with rational fibers. \label{thm:brunella}
\end{thm}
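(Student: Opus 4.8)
The plan is to run the surface foliated MMP dichotomy recalled above and to show that the alternative ``$K_\mathcal{F}$ nef'' is in fact vacuous, so that only the fibration case survives. Suppose first that $\mathcal{F}$ is not algebraically integrable; then the surface statement of the foliated MMP forces $K_\mathcal{F}$ to be nef. Since $\mathcal{F}$ is regular, the defining sequence $0 \to \mathcal{F} \to T_X \to N_\mathcal{F} \to 0$ is an exact sequence of vector bundles, with $N_\mathcal{F} = T_X/\mathcal{F}$ a line bundle, so that $c(T_X) = c(\mathcal{F})\,c(N_\mathcal{F})$ and $c_2(N_\mathcal{F}) = 0$ automatically. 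I would record $c_1(\mathcal{F}) = -K_\mathcal{F}$ and set $N := c_1(N_\mathcal{F}) = K_\mathcal{F} - K_X$, so that $c_2(T_X) = c_1(\mathcal{F})\cdot c_1(N_\mathcal{F}) = -K_\mathcal{F}\cdot N$.

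Next I would feed in Bott's vanishing theorem (Theorem \ref{thm:baumbott}), which for the regular codimension-one foliation $\mathcal{F}$ gives $N^2 = c_1(N_\mathcal{F})^2 = 0$. Expanding $(K_\mathcal{F} - K_X)^2 = 0$ and combining it with Noether's formula $K_X^2 + c_2(T_X) = 12$ (valid since $X$ is rational, so $\chi(\mathcal{O}_X) = 1$) yields, after a short manipulation, the relation $K_\mathcal{F}^2 = 3K_X^2 - 24$. As $K_\mathcal{F}$ is nef we have $K_\mathcal{F}^2 \ge 0$, whence $K_X^2 \ge 8$. Among rational surfaces this forces $X \cong \mathbb{P}^2$ (with $K_X^2 = 9$) or $X \cong \mathbb{F}_n$ (with $K_X^2 = 8$).

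Then I would exclude both possibilities by a Picard-group computation. On $\mathbb{P}^2$ the condition $N^2 = 0$ in $\mathrm{Pic}(\mathbb{P}^2) = \mathbb{Z}H$ forces $N = 0$, hence $c_2(T_X) = -K_\mathcal{F}\cdot N = 0$, contradicting $c_2(T_{\mathbb{P}^2}) = 3$. On $\mathbb{F}_n$ I would work in $\mathrm{Pic}(\mathbb{F}_n) = \mathbb{Z}C_0 \oplus \mathbb{Z}f$: writing $K_\mathcal{F} = K_X + N$ and imposing $K_\mathcal{F}^2 = N^2 = 0$ pins down $K_\mathcal{F} = -2f$ (up to the symmetry of $\mathbb{F}_0$), so $K_\mathcal{F}\cdot C_0 = -2 < 0$ and $K_\mathcal{F}$ is not nef, a contradiction. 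Thus no regular foliation with $K_\mathcal{F}$ nef exists on a rational surface, and the nef branch of the dichotomy is empty.

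It follows that $\mathcal{F}$ is algebraically integrable, induced by a rational fibration $X \dashrightarrow B$ with $\mathcal{F} = T_{X/B}$. The final step is to upgrade this to a smooth morphism with rational fibers: because $\mathcal{F}$ is regular and hence has no singular points, the associated pencil has no base points and no singular or multiple fibers, so it is a genuine smooth $\mathbb{P}^1$-bundle $X \to B$, and rationality of $X$ then forces $B \cong \mathbb{P}^1$ (so that $X \cong \mathbb{F}_n$). I expect the main obstacle to be the exclusion of the nef case on $\mathbb{F}_n$, where one must keep the Chern-class and intersection-number bookkeeping exactly consistent; the subsequent point, that the regularity hypothesis is precisely what turns ``induced by a rational fibration'' into ``induced by a smooth morphism'' by ruling out degenerate fibers, is the other place where care is needed, whereas the numerical reduction to $\mathbb{P}^2$ and $\mathbb{F}_n$ is a formal consequence of Bott's theorem and Noether's formula.
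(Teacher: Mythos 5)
Your proposal is correct and follows essentially the same route the paper sketches for this cited result of Brunella: the surface foliated MMP dichotomy, then Bott's vanishing plus Noether's formula to force $K_X^2\geq 8$ and reduce the nef branch to $\mathbb{P}^2$ and $\mathbb{F}_n$, which are then eliminated. The only quibble is that on $\mathbb{F}_n$ with $n>0$ the constraints $K_\mathcal{F}^2=N^2=0$ also admit $K_\mathcal{F}=-2C_0-nf$ and not only $-2f$, but since that class satisfies $K_\mathcal{F}\cdot f=-2<0$ your contradiction with nefness still goes through.
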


It is then natural to wonder if a similar result holds for manifolds of higher dimension. This led Touzet to make the following conjecture (see \cite{druel2017regular}):

\begin{conj}[Touzet]
Let $X$ be a projective rationally connected manifold and let $\mathcal{F}$ be a regular foliation on $X$. Then $\mathcal{F}$ is induced by a smooth morphism with rationally connected fibers.
\end{conj}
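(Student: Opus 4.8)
The plan is to combine the foliated Minimal Model Program for regular codimension one foliations on threefolds, recalled above, with a numerical argument coming from Bott's vanishing theorem. First I would run the foliated MMP on $(X,\mathcal{F})$, obtaining a sequence of smooth blow-downs $X = X_0 \to X_1 \to \cdots \to X_n = Y$ and a regular pair $(Y,\mathcal{G})$ for which the dichotomy holds: either $K_\mathcal{G}$ is nef, or $Y$ carries a Mori fiber space structure $Y \to B$ whose general fibers are tangent to $\mathcal{G}$ and $K_\mathcal{G}$-negative. Two features survive this process and are used throughout. The variety $Y$ remains rationally connected, being a smooth birational image of $X$; and $-K_Y$ remains nef. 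The latter is not automatic but follows from the discrepancy computation: if $\pi\colon X_i \to X_{i+1}$ is the blow-up of a smooth center with exceptional divisor $E$, then $K_{X_i} = \pi^* K_{X_{i+1}} + aE$ with $a > 0$, so for any curve $C \subset X_{i+1}$ with strict transform $\tilde C$ one gets $-K_{X_{i+1}} \cdot C = (-K_{X_i} + aE)\cdot \tilde C \ge 0$; hence nefness of $-K$ propagates down the tower.

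In the Mori fiber space case I expect the conclusion directly. If $\dim B = 1$ the fibers are del Pezzo surfaces tangent to the rank two foliation $\mathcal{G}$, so they are exactly its leaves; $\mathcal{G}$ is algebraically integrable and its leaves, being del Pezzo surfaces, are rationally connected. If $\dim B = 2$ the fibration is a conic bundle whose one dimensional fibers are tangent to $\mathcal{G}$, so $\mathcal{G}$ descends to a regular rank one foliation on the rational surface $B$, and Theorem \ref{thm:brunella} forces that foliation to come from a rational fibration, whence $\mathcal{G}$ is again algebraically integrable with rationally connected leaves. (The base cannot be a point, since a codimension one foliation cannot be everywhere tangent to the full tangent bundle.) Algebraic integrability with rationally connected general leaf is a birational property, so it transfers back along the blow-downs to $\mathcal{F}$ on $X$. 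Finally, since $\mathcal{F}$ is \emph{regular} and its general leaf is rationally connected, hence simply connected, the leaves carry trivial holonomy; a Reeb-type stability argument then upgrades algebraic integrability to the statement that $\mathcal{F}$ is induced by a smooth morphism with rationally connected fibers.

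The whole difficulty is therefore to rule out the case where $K_\mathcal{G}$ is nef. Here I would argue numerically. Writing $N_\mathcal{G} = \det(T_Y/\mathcal{G})$ for the normal line bundle, the exact sequence $0 \to \mathcal{G} \to T_Y \to T_Y/\mathcal{G} \to 0$ gives $c_1(N_\mathcal{G}) = K_\mathcal{G} - K_Y = K_\mathcal{G} + (-K_Y)$, a sum of two nef classes, so $N_\mathcal{G}$ is nef. On the other hand Bott's vanishing theorem (Theorem \ref{thm:baumbott}) applies because $\mathcal{G}$ is regular of codimension one, yielding $c_1(N_\mathcal{G})^2 = 0$. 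Pairing the identity $\big(K_\mathcal{G} + (-K_Y)\big)^2 = 0$ with an ample class and using that each of $K_\mathcal{G}^2$, $K_\mathcal{G}\cdot(-K_Y)$ and $(-K_Y)^2$ is a pseudoeffective curve class, I conclude that all three vanish numerically; in particular $-K_Y$ is nef with $(-K_Y)^2 \equiv 0$, i.e. $\nu(-K_Y) \le 1$. Since $Y$ is rationally connected it is uniruled, so $K_Y$ is not pseudoeffective and $-K_Y \not\equiv 0$, which disposes of $\nu(-K_Y) = 0$. (This already recovers Druel's weak Fano case, where $-K_Y$ is big and the inequality $\nu(-K_Y)\le 1$ is absurd.)

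The main obstacle is the remaining case $\nu(-K_Y) = 1$: a rationally connected threefold whose anticanonical bundle is nef and nonzero but not big. I expect to handle it by producing a fibration adapted to $-K_Y$ — heuristically $-K_Y$ should be semiample and define a map $Y \to \mathbb{P}^1$ whose general fiber $S$ satisfies $K_S \equiv (K_Y)|_S \equiv 0$, so $S$ is a surface of Kodaira dimension zero — and then deriving a contradiction with the rational connectedness of $Y$, which forbids such Calabi--Yau type fibers. Simultaneously one must check that the foliation $\mathcal{G}$, interacting with this fibration, cannot have $K_\mathcal{G}$ nef unless it is already algebraically integrable, contradicting the assumption that $(Y,\mathcal{G})$ is the nef minimal model. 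Making the structure theory for nef, non-big anticanonical threefolds precise, and controlling the foliation against it, is where the real work lies.
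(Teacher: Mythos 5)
The statement you are addressing is Touzet's conjecture, which the paper explicitly records as \emph{open} for $\dim X \geq 3$; the paper never proves it, and only establishes the special case of Theorem \ref{thm:main} (codimension one foliations on rationally connected \emph{threefolds} with $-K_X$ \emph{nef}). Your proposal in fact silently specializes to that setting: the foliated MMP you invoke (Theorem \ref{thm:spicer}) is only available for codimension one foliations on threefolds, and your propagation of nefness of $-K$ down the blow-up tower presupposes that $-K_X$ was nef to begin with, which is not among the conjecture's hypotheses. So even if every step were correct, you would at best have proved a statement strictly weaker than the one asked.

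Within that special case there are two further problems. First, the discrepancy computation does not show that nefness of $-K$ descends under a smooth blow-down $\pi\colon X_i \to X_{i+1}$ of a curve $C$: your inequality $(-K_{X_i}+aE)\cdot\tilde C\ge 0$ uses $E\cdot\tilde C\ge 0$, which fails exactly when the curve being tested is the center $C$ itself (its preimage is all of $E$, and for a section $\sigma$ of $E\to C$ one has $E\cdot\sigma<0$ in general). This is why the paper takes a different route (Lemmas \ref{lem:square} and \ref{lem:positivesquare} and Proposition \ref{prop:main}): it first shows $H\cdot K_Y^2\ge 0$, then combines Bott vanishing $N_\mathcal{G}^2\equiv 0$ with a Hodge-index argument to force $K_\mathcal{G}\equiv\lambda K_Y$ with $\lambda<0$, and only then concludes that $-K_Y$ is nef. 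Second, and decisively, you stop at the case $\nu(-K_Y)=1$, conceding that this is ``where the real work lies''; but that case is the entire content of the second half of the paper --- the Bauer--Peternell nef reduction producing a $K3$-fibration, the case analysis of multiple fibers against the Mori contractions, and the Viehweg--Zuo and Oguiso--Viehweg isotriviality results yielding the final contradiction. As it stands, the proposal is an outline of the paper's Theorem \ref{thm:main} with its hardest step missing, offered as a proof of a conjecture the paper does not claim to resolve.
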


This conjecture is open for $\dim(X) \geq 3$. By a well known result of Campana and Kollar-Miyaoka-Mori, every weak Fano manifold is rationally connected. Druel showed the following theorem, which confirms Touzet's conjecture in the case of weak Fanos:

\begin{thm}[\cite{druel2017regular}]
Let $X$ be weak Fano manifold, i.e. $-K_X$ is nef and big. Let $\mathcal{F}$ be a regular foliation on $X$. Then $\mathcal{F}$ is induced by a smooth morphism with rationally connected fibers.
\end{thm}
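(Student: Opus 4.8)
The plan is to reduce the statement to two inputs: an algebraicity criterion that produces a fibration with rationally connected leaves once $K_\mathcal{F}$ fails to be pseudo-effective, and a global stability argument that upgrades algebraic integrability to a genuine smooth morphism. First I would record that $-K_X$ nef and big forces $X$ to be rationally connected (Campana, Koll\'ar--Miyaoka--Mori, Zhang), so that $X$ is covered by rational curves; this is the geometric input behind the algebraicity criterion used below. Throughout I use the exact sequence $0 \to \mathcal{F} \to T_X \to N_\mathcal{F} \to 0$, valid because $\mathcal{F}$ is regular, which gives $c_1(N_\mathcal{F}) = -K_X + K_\mathcal{F}$ in $N^1(X)$.

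The heart of the argument is the claim that $K_\mathcal{F}$ is not pseudo-effective. Suppose it were. Then $c_1(N_\mathcal{F}) = (-K_X) + K_\mathcal{F}$ is the sum of a big class (indeed $-K_X$ is nef and big) and a pseudo-effective class, hence is itself big; in particular $c_1(N_\mathcal{F})^n > 0$, where $n = \dim X$. On the other hand, since $\mathcal{F}$ is a regular foliation of codimension $q$, Bott's vanishing theorem (Theorem \ref{thm:baumbott}) forces every polynomial of degree $> q$ in the Chern classes of $N_\mathcal{F}$ to vanish; in particular $c_1(N_\mathcal{F})^{q+1} = 0$ in $H^{2(q+1)}(X)$. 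As $q+1 \le n$ (which holds since $\mathcal{F}$ has positive rank), this yields $c_1(N_\mathcal{F})^n = c_1(N_\mathcal{F})^{q+1} \cdot c_1(N_\mathcal{F})^{\,n-q-1} = 0$, contradicting bigness. Hence $K_\mathcal{F}$ is not pseudo-effective.

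With $K_\mathcal{F}$ not pseudo-effective in hand, I would invoke the algebraicity criterion for foliations (Bogomolov--McQuillan, Kebekus--Sol\'a Conde--Toma, Campana--P\'aun): non-pseudo-effectivity of $K_\mathcal{F}$ produces a covering family of rational curves tangent to $\mathcal{F}$ along which $\mathcal{F}$ is positive, and this forces $\mathcal{F}$ to be algebraically integrable with rationally connected general leaf. At this stage $\mathcal{F}$ is induced by a dominant rational map $X \dashrightarrow B$ whose general fibre is the closure of a general leaf.

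The final and most delicate step is to promote this algebraically integrable structure to an everywhere-defined smooth morphism with rationally connected fibres, and this is where regularity is indispensable. Because $\mathcal{F}$ is a subbundle of $T_X$ of constant rank, every leaf has the same dimension and the leaves genuinely partition $X$; there are no singular leaves to create degenerate or multiple fibres. Moreover, a rationally connected leaf is simply connected, so its holonomy is trivial. I would then apply a global stability theorem of Reeb--Ehresmann type (in its holomorphic/algebraic form): a regular foliation on a compact manifold possessing a compact leaf with trivial holonomy is a locally trivial fibration, so the set of such leaves is open and closed, and by connectedness of $X$ all leaves are compact, rationally connected, and mutually diffeomorphic. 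The leaf space is then a smooth projective base $B$ and $\mathcal{F}$ is induced by the resulting smooth morphism $X \to B$, completing the proof. I expect this last step --- ruling out degenerations of the general leaf and identifying the leaf space with a smooth base --- to be the main obstacle, precisely because algebraic integrability by itself only controls the general leaf; it is the interplay between regularity (constant leaf dimension, no singular locus) and the simple-connectivity of rationally connected leaves (triviality of holonomy) that rigidifies the family into a smooth fibration.
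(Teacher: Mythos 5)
First, note that the paper does not prove this statement: it is quoted from Druel's work, so there is no in-paper proof to compare against. Judged on its own, your proposal has a reasonable top-level skeleton (show $K_\mathcal{F}$ is not pseudo-effective via Bott vanishing, deduce algebraic integrability with rationally connected leaves, then rigidify via a global stability theorem using simple connectedness of the leaves), but two of the three steps contain genuine gaps. The first is the inference ``$c_1(N_\mathcal{F})$ is big, hence $c_1(N_\mathcal{F})^n>0$.'' Bigness of an $\mathbb{R}$-divisor class does not imply positivity of its top self-intersection: on the blow-up of $\mathbb{P}^2$ at a point, the class $H+E$ is big (big plus effective) yet $(H+E)^2=0$. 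What you actually need is that a class of the form (nef and big) plus (pseudo-effective) cannot satisfy $N^{q+1}=0$ in cohomology, and the natural expansion $(-K_X)^{n-q-1}\cdot(-K_X+K_\mathcal{F})^{q+1}=\sum_i\binom{q+1}{i}(-K_X)^{n-i}\cdot K_\mathcal{F}^{\,i}$ contains terms with $i\ge 2$ whose sign is not controlled, since $K_\mathcal{F}$ is only assumed pseudo-effective. So the contradiction with Bott's vanishing is not established as written.

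The second gap is the algebraicity step. The Bogomolov--McQuillan / Campana--P\u{a}un criterion does not say that non-pseudo-effectivity of $K_\mathcal{F}$ forces $\mathcal{F}$ itself to be algebraically integrable with rationally connected leaves; it produces a nonzero algebraically integrable \emph{subfoliation} with rationally connected leaves (equivalently, $\mathcal{F}$ is uniruled). The implication as you state it is false even for regular foliations: on $\mathbb{P}^1\times A$ with $A$ an abelian variety, the regular foliation $\mathrm{pr}_1^*T_{\mathbb{P}^1}\oplus L$, where $L\subset \mathrm{pr}_2^*T_A$ is a constant line subbundle with non-algebraic leaves, has $K_\mathcal{F}=\mathrm{pr}_1^*\mathcal{O}_{\mathbb{P}^1}(-2)$, which is not pseudo-effective, yet $\mathcal{F}$ is not algebraically integrable. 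Passing from the algebraic part of $\mathcal{F}$ to all of $\mathcal{F}$ is exactly where the weak Fano hypothesis must be used a second time, and it is where the substantial work lies --- compare the present paper, where even in the codimension-one threefold case with $K_\mathcal{F}$ not pseudo-effective one needs the foliated MMP and Mori's classification (corollary \ref{cor1}), not a one-line citation. Your final step, a global stability argument applied to a compact rationally connected (hence simply connected, hence holonomy-free) leaf, is the right idea, but you should invoke the holomorphic/K\"ahler version of global stability rather than the classical real-codimension-one Reeb theorem.
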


In this paper we address Touzet's conjecture in the case when $\dim(X) = 3$ and $\mathcal{F}$ has codimension $1$. Our main result is the following, which generalizes Druel's result in the case of codimension one foliations on threefolds:

\begin{thm}
Let $X$ be a rationally connected threefold with $-K_X$ nef. Let $\mathcal{F}$ be a regular foliation of codimension $1$ on $X$. Then $\mathcal{F}$ is induced by a smooth morphism with rational fibers. \label{thm:main}
\end{thm}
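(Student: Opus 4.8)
The plan is to run the foliated Minimal Model Program for regular codimension-one foliations on threefolds (\cite{Spicer2017}) on the pair $(X,\mathcal{F})$. This produces a sequence of smooth blow-downs $X = X_0 \to X_1 \to \cdots \to X_k = Y$ with $(Y,\mathcal{G})$ regular, and either $K_\mathcal{G}$ is nef, or $Y$ carries a foliated Mori fiber space structure $\phi\colon Y \to Z$ whose fibers are tangent to $\mathcal{G}$ and $K_\mathcal{G}$-negative. Since the $X_i \to X_{i+1}$ are birational, $Y$ is again rationally connected; moreover, writing $-K_{X_i} = \pi^*(-K_{X_{i+1}}) - \sum_j a_j E_j$ with $a_j > 0$ and intersecting with strict transforms of curves, one checks that nefness of $-K_X$ descends, so $-K_Y$ is nef. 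Finally, from the conormal exact sequence $0 \to \mathcal{G} \to T_Y \to N_\mathcal{G} \to 0$ I record $K_\mathcal{G} = K_Y + N_\mathcal{G}$, while Bott's vanishing theorem (Theorem~\ref{thm:baumbott}) gives $c_1(N_\mathcal{G})^2 = 0$ in $H^4(Y,\mathbb{R})$. The argument then splits according to the two outcomes of the program.

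In the Mori fiber space case, the $K_\mathcal{G}$-negative fibers of $\phi$ provide a covering family of rational curves tangent to $\mathcal{G}$ along which $K_\mathcal{G}$ is negative. By the algebraicity criterion of Bogomolov--McQuillan (\cite{mcquillan}), $\mathcal{G}$ is then algebraically integrable and its general leaf is rationally connected. As the leaves are surfaces and a rationally connected surface is rational, the closures of the leaves are rational surfaces, and they are the fibers of the morphism $Y \to Z$ to the space of leaves, which is a curve $Z = \mathbb{P}^1$ (being dominated by the rationally connected $Y$) and which induces $\mathcal{G}$. Thus in this case $\mathcal{G}$ is induced by a fibration with rational fibers.

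The main obstacle is to rule out the case $K_\mathcal{G}$ nef. Here I combine $-K_Y$ nef and $K_\mathcal{G}$ nef in the identity $N_\mathcal{G} = K_\mathcal{G} + (-K_Y)$ to conclude that $N_\mathcal{G}$ is nef; together with $c_1(N_\mathcal{G})^2 = 0$ this forces $N_\mathcal{G}$ to have numerical dimension at most one. Since $Y$ is rationally connected, $K_Y$ is not pseudoeffective, so $N_\mathcal{G} = K_\mathcal{G} - K_Y \not\equiv 0$ and its numerical dimension is exactly one; restricting the sequence to a very free rational curve $C$, on which $T_Y|_C$ is ample, shows $N_\mathcal{G}\cdot C > 0$, so $N_\mathcal{G}$ is positive on a covering family. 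A nef divisor of numerical dimension one that is positive on a covering family defines a fibration $g\colon Y \to B$ onto a curve with $N_\mathcal{G} \equiv g^*(\text{ample})$, the required semiampleness being supplied by abundance in this low numerical dimension, respectively by the classification of regular codimension-one foliations with nef canonical bundle and their transverse structures. If the fibers of $g$ are tangent to $\mathcal{G}$, then $\mathcal{G} = T_{Y/B}$ and the leaves are the fibers $Y_b$, whose canonical bundle $K_\mathcal{G}|_{Y_b}$ is nef; such fibers are non-uniruled, and by semipositivity of the relative canonical bundle this makes $K_Y$ pseudoeffective, contradicting rational connectedness. If instead the fibers are transverse to $g$, then $\mathcal{G}$ is transverse to the fibration and is governed by a representation of the fundamental group of the base minus finitely many points; as $Y$ is rationally connected it is simply connected and carries no holomorphic one-forms, which collapses the transverse structure and again yields a contradiction. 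Hence $K_\mathcal{G}$ nef cannot occur, and we are always in the Mori fiber space case.

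Finally I descend the structure to $X$. The morphism $X \to Y \to Z$ is the composition of the birational morphism $X \to Y$ with the fibration $Y \to Z$ of the Mori fiber space step, and $\mathcal{F}$, being the transform of $\mathcal{G}$, is exactly the foliation induced by $X \to Z$ (it is tangent to the fibers of $X \to Z$ and has rank equal to the relative dimension). Since $\mathcal{F}$ is regular by hypothesis, this morphism is a submersion, hence a smooth morphism; its fibers are smooth surfaces birational to the rational fibers of $Y \to Z$, and therefore rational. This shows that $\mathcal{F}$ is induced by a smooth morphism with rational fibers, as claimed. I expect the delicate point throughout to be the nef case: establishing semiampleness of $N_\mathcal{G}$ and excluding the transverse-structure subcase rely on abundance-type results and on the classification of foliations with nef canonical bundle, which is precisely where rational connectedness, through triviality of the fundamental group and the absence of holomorphic forms, must be brought to bear.
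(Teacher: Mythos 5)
Your outline (run Spicer's foliated MMP, split into the Mori fiber space case and the $K_\mathcal{G}$-nef case) matches the paper's, but several of the steps you rely on do not go through as stated, and they are precisely the points where the paper has to work hardest.

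First, the claim that nefness of $-K_X$ descends to $-K_Y$ "by intersecting with strict transforms" is a genuine gap: for a curve $\Gamma$ equal to the blown-up center $C$ one gets $-K_Y\cdot C = -K_X\cdot\tilde{C} + E\cdot\tilde{C}$ for a section $\tilde{C}\subset E$, and $E\cdot\tilde{C}$ can be negative, so no sign conclusion follows. The paper does not descend nefness directly; it first proves $H\cdot K_Y^2\geq 0$ by an explicit blow-up computation (Lemma \ref{lem:positivesquare}), then uses a Hodge-index argument (Lemma \ref{lem:square}) together with $N_\mathcal{G}^2\equiv 0$ and $K_\mathcal{G}$ nef to force $K_\mathcal{G}\equiv\lambda K_Y$ with $\lambda<0$, and only then concludes $-K_Y$ is nef and $K_Y^2\equiv 0$. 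Second, in the Mori fiber space case you conflate the base of the contraction with the space of leaves: when $\phi\colon Y\to Z$ is a conic bundle, $Z$ is a surface, the fibers tangent to $\mathcal{G}$ are curves, and the leaves are preimages of leaves of a rank-one foliation $\mathcal{H}$ on $Z$. Bogomolov--McQuillan does not apply here, since it needs $\mathcal{G}|_\ell$ ample on the contracted curves, whereas $\mathcal{G}|_\ell$ is an extension of $\mathcal{O}_\ell$ by $\mathcal{O}_\ell(2)$; algebraic integrability in this case comes instead from showing $\mathcal{H}$ is regular (Lemma \ref{lem:conicbundlecase}) and invoking Brunella's theorem on the rational surface $Z$. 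Third, in the nef case your appeal to "abundance" for the semiampleness of $N_\mathcal{G}$ is not available, and your contradiction in the tangent subcase fails: semipositivity gives $K_{Y/B}$ pseudoeffective, but $K_Y = K_{Y/B} + g^*K_{\mathbb{P}^1} = K_{Y/B} - 2F$, which need not be pseudoeffective. The paper's route is different and essential: it applies Bauer--Peternell's nef reduction to $-K_Y$ (using $K_Y^2\equiv 0$ and the Mori classification) to produce a genuine $K3$-fibration over $\mathbb{P}^1$ inducing $\mathcal{G}$, shows regularity of $\mathcal{G}$ forces this fibration to be smooth, and then derives a contradiction from Viehweg--Zuo isotriviality and Oguiso--Viehweg triviality against rational connectedness. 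Finally, the last step ("regular foliation induced by a morphism implies the morphism is a submersion") also needs the Graber--Harris--Starr section argument to rule out multiple fibers, as in Corollary \ref{cor1}.
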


To prove this result, we apply the foliated MMP to $\mathcal{F}$. When $K_\mathcal{F}$ is not pseudo-effective, we end up with a pair $(Y,\mathcal{G})$ where $Y$ and $\mathcal{G}$ are regular and $Y$ is a Mori Fiber Space with fibers tangent to $\mathcal{G}$. By using Mori's classification of smooth Mori Fiber Spaces of dimension $3$, we show directly that the foliation $\mathcal{G}$ is induced by a smooth morphism with rational leaves.

When $K_\mathcal{F}$ is pseudo-effective, we end up with a pair $(Y,\mathcal{G})$ where $Y$ and $\mathcal{G}$ are regular and $K_\mathcal{G}$ is nef. We then show that $-K_Y$ is nef, and apply results of Bauer-Peternell, which classify the nef reduction map of $-K_Y$ in the case of rationally connected threefolds. We are able to show that this map is actually a fibration by $K3$-surfaces, and induces $\mathcal{G}$. We will show that this is a contradiction. Hence $K_\mathcal{F}$ cannot be pseudo-effective, and $\mathcal{F}$ is induced by a smooth morphism with rational fibers.

\subsubsection*{Acknowledgments}
The author would like to thank St\'ephane Druel and Carolina Araujo for the help he received in the preparation of his thesis, whose second part gives this paper. During the writing of this paper, the author received financial support from CNPq (process number 140605/2017-7), and from CAPES/COFECUB (process number 88887.192325/2018-00 / Ma 932/19).

\section{Preliminaries}
In this section we define foliations and discuss some of  its properties. In particular, we state the foliated Minimal Model Program for regular foliations of codimension one on threefolds, proved by Spicer. We also collect some technical results on nef reduction maps, isotrivial families and Mori contractions in the presence of foliations, which will be later used in the proof of our main result.

\subsubsection*{Holomorphic foliations}

We begin with the basic definitions and results which we will use concerning holomorphic foliations.

\begin{defi}
A holomorphic foliation of rank $r$ (or codimension $n-r$) on a normal complex variety $X$ of dimension $n$ is defined by a coherent subsheaf $\mathcal{F} \subset T_X$, with generic rank $r$, such that $[\mathcal{F},\mathcal{F}] \subset \mathcal{F}$ and $T_X/\mathcal{F}$ is torsion free. The canonical sheaf of $\mathcal{F}$ is defined as $K_\mathcal{F} = \det(\mathcal{F}^*)$. The singular locus of $(X,\mathcal{F})$ is defined as ${\rm sing}(\mathcal{F}) = X \setminus X_0$, where $X_0$ is the open subset of the regular locus of $X$ where $\mathcal{F}$ is a subvector-bundle of $T_X$. We say $\mathcal{F}$ is a regular foliation if $X$ is regular and $\mathcal{F}$ is a subvector-bundle of $T_X$ (i.e. ${\rm sing}(\mathcal{F}) = \emptyset$).
\end{defi}

The following theorem implies that $\mathcal{F}$ decomposes $X_0$ into a union of immersed submanifolds of dimension $r = {\rm rank}(\mathcal{F})$ which will be called the leaves of $\mathcal{F}$.

\begin{thm}[{{\cite{clebsch1866ueber}}}]
Let $X$ be a complex manifold of dimension $n$ and $T \subset T_X$ a subbundle of rank $r$. Suppose that for any two sections $v,w$ of $T$, we have $[v,w]$ a section of $T$. Then, for any point $p \in X$, there exists an open neighborhood $U$ of $p$ in $X$, such that $U \cong \mathbb{D}^r \times V$, where $V$ is a manifold of dimension $n-r$ and $\mathbb{D} \subset \mathbb{C}$ is the open unitary disk, such that $T_{|U} \cong \ker(d\pi)$, where $\pi\colon \mathbb{D}^r \times V \rightarrow V$ is the second projection.\label{thm:frobenius}
\end{thm}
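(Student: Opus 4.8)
The plan is to prove this classical statement---the holomorphic Frobenius theorem---by induction on the rank $r$, reducing the whole argument to the holomorphic flow-box (straightening) theorem for a single nowhere-vanishing vector field. That straightening statement rests only on the existence and uniqueness theorem for holomorphic ODEs, together with the holomorphic dependence of solutions on the initial conditions and on parameters; consequently every construction below stays within the holomorphic category, and the resulting coordinate changes are automatically biholomorphic.

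\textbf{Base case ($r=1$).} Here $T$ is spanned near $p$ by a nowhere-vanishing holomorphic vector field $v$, and the involutivity hypothesis is vacuous. I would prove the flow-box theorem directly: choose a hypersurface $H\ni p$ transverse to $v$, with holomorphic coordinates $(z_2,\dots,z_n)$ on $H$, and flow $H$ along $v$. Holomorphic dependence of the flow on the time parameter $z_1$ and on the initial point yields, on a small polydisk, a biholomorphism $\Phi\colon \mathbb{D}\times H \to U$ with $\Phi_*(\partial/\partial z_1)=v$. In these coordinates $T_{|U}=\ker(d\pi)$, where $\pi(z)=(z_2,\dots,z_n)$.

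\textbf{Inductive step.} For $r\geq 2$, pick a holomorphic frame $X_1,\dots,X_r$ of $T$ near $p$ and straighten $X_1$ to $\partial/\partial z_1$ as above. Replacing each $X_i$ (for $i\geq 2$) by $Y_i:=X_i-\mathrm{d}z_1(X_i)\,\partial/\partial z_1$, I obtain a frame $\{\partial/\partial z_1,Y_2,\dots,Y_r\}$ of $T$ in which $Y_2,\dots,Y_r$ have vanishing $\partial/\partial z_1$-component. A direct computation shows that $[\partial/\partial z_1,Y_i]=\partial Y_i/\partial z_1$ and, for $i,j\geq 2$, that $[Y_i,Y_j]$ again have no $\partial/\partial z_1$-component; since both brackets lie in $T$ by hypothesis, and $\partial/\partial z_1$ is the only frame field carrying a $\partial/\partial z_1$-component, both brackets lie in $\mathrm{span}(Y_2,\dots,Y_r)$. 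Restricting to the slice $S=\{z_1=0\}$, the fields $Y_2,\dots,Y_r$ are tangent to $S$ and, by this relation, span a rank-$(r-1)$ involutive distribution on $S$; the induction hypothesis then produces coordinates $(w_2,\dots,w_n)$ on a neighborhood of $p$ in $S$ straightening it.

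\textbf{Propagation off the slice and the main obstacle.} The delicate point is to upgrade the slice coordinates to coordinates on the full neighborhood in which $T$ is straightened, and this is exactly where Lie-bracket closure---rather than mere pointwise spanning---is indispensable. I would use the relation $\partial Y_i/\partial z_1\in\mathrm{span}(Y_2,\dots,Y_r)$, which is a linear system of ODEs in $z_1$ with holomorphic coefficients, to perform a $z_1$-dependent invertible change of frame making $Y_2,\dots,Y_r$ independent of $z_1$; equivalently, one may propagate the would-be first integrals $w_{r+1},\dots,w_n$ off $S$ by keeping them constant along the $\partial/\partial z_1$-flow, which is legitimate because $\partial/\partial z_1\in T$. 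Either device yields coordinates $(z_1,w_2,\dots,w_n)$ on $U\cong\mathbb{D}^r\times V$ in which $T=\ker(d\pi)$ for $\pi$ the projection onto $(w_{r+1},\dots,w_n)$. I expect this propagation to be the only genuine difficulty: the slice argument straightens $T$ merely along $S$, so one must combine the flow of $\partial/\partial z_1$ with the bracket relations to guarantee that the distribution stays straightened in the transverse directions, and the holomorphic dependence on parameters of solutions of the linear ODE system is precisely what preserves biholomorphy throughout.
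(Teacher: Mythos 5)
Your argument is correct, and there is nothing in the paper to compare it against: the statement is Theorem \ref{thm:frobenius}, the classical (holomorphic) Frobenius integrability theorem, which the paper quotes with a citation to Clebsch and does not prove. Your induction-on-rank reduction to the flow-box theorem is the standard classical proof, and all the key points are in place: the modified frame $\{\partial/\partial z_1, Y_2,\dots,Y_r\}$, the observation that $[\partial/\partial z_1,Y_i]$ and $[Y_i,Y_j]$ have no $\partial/\partial z_1$-component and hence (by involutivity) lie in ${\rm span}(Y_2,\dots,Y_r)$, the induced involutive rank-$(r-1)$ distribution on the slice $S=\{z_1=0\}$, and the propagation off $S$ via the linear ODE system $\partial Y_i/\partial z_1=\sum_k b_{ik}Y_k$, whose holomorphic dependence on parameters keeps everything in the holomorphic category. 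One small caveat: the parenthetical justification that extending the first integrals constantly along the $z_1$-flow ``is legitimate because $\partial/\partial z_1\in T$'' is not by itself sufficient --- that only gives $dw_j(\partial/\partial z_1)=0$, not $dw_j(Y_i)=0$ off the slice; the actual justification is the uniqueness argument for the linear ODE satisfied by $g_{ij}=dw_j(Y_i)$ (which vanishes on $S$ by the inductive straightening), and since you explicitly identify that ODE system as the mechanism, this is a matter of phrasing rather than a gap.
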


\begin{defi}
Let $X$ be a normal complex variety and let $\mathcal{F}$ be a foliation of rank $r$ on $X$. Let $X_0$ be the open subset of the regular locus of $X$ such that $\mathcal{F}$ is a subvector-bundle of $T_X$ on $X_0$. We say that a immersed submanifold $F \subset X_0$ of dimension $r$ is a leaf of $\mathcal{F}$ if $T_F \rightarrow (T_X)_{|F}$ factors through $\mathcal{F}_{|F} \rightarrow (T_X)_{|F}$.
\end{defi}

\begin{rmk}
We might also define foliations by twisted reflexive differential forms. Indeed, let $X_0$ be as above. On $X_0$ we have an exact sequence of vector-bundles:
\[0 \rightarrow \mathcal{F}_{|X_0} \rightarrow (T_X)_{|X_0} \rightarrow (T_X/\mathcal{F})_{|X_0}\rightarrow 0.\]
Dualizing $(T_X)_{|X_0} \rightarrow (T_X/\mathcal{F})_{|X_0}\rightarrow 0$, we get an injective morphism
\[ 0 \rightarrow (T_X/\mathcal{F})_{|X_0}^* \rightarrow (\Omega_X^1)_{|X_0},\]
and thus a global section 
\[ 0 \rightarrow \mathcal{O}_{X_0} \rightarrow (\Omega_X^1)_{|X_0} \otimes (T_X/\mathcal{F})_{|X_0},\]
i.e. a non-zero section $\omega \in H^0(X_0,(\Omega_X^1)_{|X_0} \otimes (T_X/\mathcal{F})_{|X_0})$. The sheaf $T_X/\mathcal{F}$ is called the normal sheaf of $\mathcal{F}$, and we will denote it by $N_\mathcal{F}$.
\end{rmk}

The previous remark allows us to define the pullback of foliations by dominant maps:

\begin{rmk}[{{\cite[3.2]{druelcodim1}}}]
Let $\pi\colon Y \dashrightarrow X$ be a dominant rational map between normal varieties. Let $\mathcal{F}$ be a codimension $q$ foliation on $X$. Then there exists a codimension $q$ foliation $\pi^{-1}(\mathcal{F})$ on $Y$, called the pullback of $\mathcal{F}$ by $\pi$, such that the leaves of $\pi^{-1}(\mathcal{F})$ are pre-images of leaves of $\mathcal{F}$.
\end{rmk}

We see that if we pullback a foliation by a morphism $\pi$, then the general fiber of $\pi$ is tangent to this pullback foliation. The following lemma shows that the converse is also true, i.e., that if there is a fibration $\pi$ with general fiber tangent to $\mathcal{F}$, then $\mathcal{F}$ is the pullback of a foliation by $\pi$.

\begin{lem}[{{\cite[Lemma 6.7]{fanofoliations}}}]\label{lem:pullbackbyequidimensional}
Let $\pi\colon X \rightarrow Y$ be an equidimensional morphism of connected fibers between normal varieties. Let $\mathcal{F}$ be a foliation of rank $r$ on $X$. Suppose the general fiber of $\pi$ is tangent to $\mathcal{F}$. Then there exists a foliation $\mathcal{G}$ on $Y$, of rank $r - (\dim(X) - \dim(Y))$, such that the following sequence is exact:
\[0 \rightarrow T_{X/Y} \rightarrow \mathcal{F} \rightarrow (\pi^*\mathcal{G})^{**}.\]
In this case, $\mathcal{F} = \pi^{-1}(\mathcal{G})$.
\end{lem}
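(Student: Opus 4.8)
The plan is to build $\mathcal{G}$ as the fibrewise image of $\mathcal{F}$ under $d\pi$, the one subtle point being that this image is constant along the fibres of $\pi$ — a fact forced by the integrability of $\mathcal{F}$ together with the inclusion $T_{X/Y}\subseteq\mathcal{F}$. First I would restrict to the dense open subset $X^\circ\subseteq X$ where $X$ is smooth, $\pi$ is a smooth morphism, and $\mathcal{F}$ is a subbundle. Over $X^\circ$ there is the relative tangent sequence
\[0 \rightarrow T_{X/Y} \rightarrow T_X \rightarrow \pi^* T_Y \rightarrow 0.\]
The hypothesis that the general fibre is tangent to $\mathcal{F}$ means $T_{X/Y}\subseteq\mathcal{F}$ over a dense open set; hence the composite $T_{X/Y}\to T_X\to N_\mathcal{F}$ vanishes generically, and since $N_\mathcal{F}=T_X/\mathcal{F}$ is torsion free it vanishes identically, giving the global inclusion $T_{X/Y}\hookrightarrow\mathcal{F}$ that will sit on the left of the asserted sequence. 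Composing $\mathcal{F}\hookrightarrow T_X$ with $T_X\to\pi^*T_Y$ produces $\phi\colon\mathcal{F}\to\pi^*T_Y$ whose kernel is $\mathcal{F}\cap T_{X/Y}=T_{X/Y}$, so $\mathcal{F}/T_{X/Y}$ embeds into $\pi^*T_Y$ as a subsheaf of generic rank $r-(\dim X-\dim Y)$.

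Next I would show that the subspaces $W_x:=\mathrm{im}(\phi_x)\subseteq T_{Y,\pi(x)}$ depend only on $\pi(x)$. This is where the Lie bracket enters. Working in local coordinates $(x_1,\dots,x_m,y_1,\dots,y_n)$ with $\pi$ the projection onto the $y$'s and $T_{X/Y}=\langle\partial_{x_1},\dots,\partial_{x_m}\rangle$, I choose local sections $v_1,\dots,v_s$ of $\mathcal{F}$ whose images $d\pi(v_k)=\sum_j b_{kj}\,\partial_{y_j}$ frame $W$, and set $B=(b_{kj})$. Since $\partial_{x_i}\in T_{X/Y}\subseteq\mathcal{F}$ and $[\mathcal{F},\mathcal{F}]\subseteq\mathcal{F}$, we get $[v_k,\partial_{x_i}]\in\mathcal{F}$, and projecting gives $\sum_j(\partial_{x_i}b_{kj})\,\partial_{y_j}=-d\pi([v_k,\partial_{x_i}])\in W$. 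Thus $\partial_{x_i}B=C_iB$ for suitable matrices $C_i$; because left multiplication by an invertible matrix preserves the row span, integrating this linear system along any path inside the fibre $\pi^{-1}(y)$ — which is connected by hypothesis — shows that $\mathrm{RowSpace}(B)$ is constant on the fibre, i.e. $W_x$ depends only on $y$. Consequently the $W_x$ glue to a subsheaf $\mathcal{G}^\circ\subseteq T_{Y^\circ}$ over a dense open $Y^\circ$ with $\phi|_{X^\circ}$ equal to $\pi^*$ of the inclusion $\mathcal{G}^\circ\hookrightarrow T_{Y^\circ}$.

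Finally I would let $\mathcal{G}$ be the saturation of $\mathcal{G}^\circ$ in $T_Y$, so that $T_Y/\mathcal{G}$ is torsion free; it has rank $r-(\dim X-\dim Y)$, and $[\mathcal{G},\mathcal{G}]\subseteq\mathcal{G}$ follows from $[\mathcal{F},\mathcal{F}]\subseteq\mathcal{F}$ since brackets of $\pi$-projectable fields project to brackets, so $\mathcal{G}$ is a foliation. The map $\phi$ then factors through $(\pi^*\mathcal{G})^{**}$, yielding the left-exact sequence $0\to T_{X/Y}\to\mathcal{F}\to(\pi^*\mathcal{G})^{**}$, and $\mathcal{F}=\pi^{-1}(\mathcal{G})$ is immediate since $\mathcal{F}$ and $\pi^{-1}(\mathcal{G})$ agree on $X^\circ$ and are both saturated, equivalently because their leaves are the $\pi$-preimages of the leaves of $\mathcal{G}$. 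I expect the genuine obstacle to be not the local descent above but the global bookkeeping: the non-smooth locus of an equidimensional $\pi$ can be a divisor rather than codimension $\ge 2$, so extending $\phi$ and the defining inclusion of $\mathcal{G}$ across it, and verifying the kernel remains exactly $T_{X/Y}$ there, is precisely what the reflexive hull $(\pi^*\mathcal{G})^{**}$ is designed to absorb. Here equidimensionality controls the geometry of the bad fibres while connectedness of the fibres is what makes $W_x$ — and hence $\mathcal{G}$ — single valued.
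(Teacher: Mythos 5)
The paper does not prove this lemma; it is quoted verbatim from Araujo--Druel \cite[Lemma 6.7]{fanofoliations}, so there is no in-paper argument to compare against. Your proof is correct and follows essentially the same standard descent argument as the cited source: the inclusion $T_{X/Y}\subseteq\mathcal{F}$ via torsion-freeness of $N_\mathcal{F}$, constancy of the image of $d\pi(\mathcal{F})$ along the connected fibres forced by $[\mathcal{F},T_{X/Y}]\subseteq\mathcal{F}$, and saturation plus reflexive hulls to handle the locus where $\pi$ fails to be smooth.
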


The importance of the normal bundle for regular foliations will come from the following two results. They will be essential in the proof of our main result.

\begin{rmk}\label{rmk:bottconnection}
Let $X$ be a manifold and let $\mathcal{F}$ be a regular foliation on $X$. Consider $N_\mathcal{F}$ the normal bundle of $\mathcal{F}$. Then there is a natural $\mathcal{F}$-connection $\nabla$ on $N_\mathcal{F}$. Indeed, let $U$ be a local section of $N_\mathcal{F}$ and let $V$ be a local section of $\mathcal{F}$. Then we may define
\[\nabla_V(U) = p([V,T]),\]
where $p\colon T_X \rightarrow N_\mathcal{F}$ is the projection and $T$ is a local section of $T_X$ such that $p(T) = U$. Since $\mathcal{F}$ is closed under the Lie bracket, this is well defined.

Now, let $Z \subset X$ be a submanifold which is tangent to $\mathcal{F}$, i.e. $T_Z \subset (\mathcal{F})_{|Z}$. Then, $\nabla_{|Z}$ is a holomorphic connection on ${N_\mathcal{F}}_{|Z}$. In particular, all the Chern classes of ${N_\mathcal{F}}_{|Z}$ vanish (see \cite[Theorem 4]{atiyah1957complex}).
\end{rmk}

Before stating the other result, let us define a Chern polynomial of degree $d$ of a vector bundle $E$ on a manifold $X$ as any element of the form $c_{i_1}(E)\cdot c_{i_2}(E) \cdot \dots c_{i_s}(E) \in H^{2i_1+2i_2+\dots+2i_s}(X,\mathbb{C})$, where $d = i_1 +\dots + i_s$. 

\begin{thm}[{{\cite{baumbott}}}]\label{thm:baumbott}
Let $X$ be a manifold and let $\mathcal{F}$ be a codimension $q$ foliation on $X$. Let $\varphi = \varphi(N_\mathcal{F}) \in H^{2l}(X,\mathbb{C})$ be a Chern polynomial on $N_\mathcal{F}$ of degree $l$, with $q < l \leq \dim(M)$. If $\mathcal{F}$ is regular, then $\varphi(N_\mathcal{F}) = 0$.
\end{thm}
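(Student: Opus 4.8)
The plan is to prove this classical Bott-type vanishing via Chern--Weil theory, using the Bott partial connection from Remark \ref{rmk:bottconnection} together with the fact that its curvature is annihilated by vectors tangent to $\mathcal{F}$.

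First I would recall the Bott partial connection $\nabla$ on $N_\mathcal{F}$: for $V$ a local section of $\mathcal{F}$ and $U$ a local section of $N_\mathcal{F}$ with lift $\tilde U \in T_X$, set $\nabla_V U = p([V, \tilde U])$, where $p\colon T_X \to N_\mathcal{F}$ is the projection. As noted in the remark, integrability of $\mathcal{F}$ makes this well-defined. The crucial computation is that this partial connection is \emph{flat along $\mathcal{F}$}: for $V, W$ sections of $\mathcal{F}$, choosing $[W, \tilde U]$ as a lift of $\nabla_W U$, one obtains
\[
R(V,W)U = p\big([V,[W,\tilde U]] - [W,[V,\tilde U]] - [[V,W],\tilde U]\big),
\]
which vanishes by the Jacobi identity (note $[V,W]$ is again a section of $\mathcal{F}$ by integrability, so $\nabla_{[V,W]}$ is defined).

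Next I would extend $\nabla$ to a genuine smooth connection $\tilde\nabla$ on $N_\mathcal{F}$ that agrees with the Bott connection in directions tangent to $\mathcal{F}$; this is always possible by gluing local extensions with a partition of unity. Writing $\Theta$ for the curvature of $\tilde\nabla$, the agreement along $\mathcal{F}$ together with the flatness above forces $\Theta(V, W) = 0$ for all sections $V, W$ of $\mathcal{F}$. Choosing a local coframe $\theta^1, \dots, \theta^q$ spanning the conormal bundle $N_\mathcal{F}^* = \mathrm{ann}(\mathcal{F}) \subset \Omega_X^1$, completed by forms dual to a frame of $\mathcal{F}$, this vanishing says precisely that every entry $\Theta^a_b$ of the curvature matrix has no purely tangential component, i.e. each $\Theta^a_b$ lies in the ideal generated by $\theta^1, \dots, \theta^q$ and hence carries at least one conormal factor.

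Finally, Chern--Weil theory represents $\varphi(N_\mathcal{F})$ in $H^{2l}(X,\mathbb{C})$ by the closed $2l$-form $\varphi(\Theta)$, obtained as a degree-$l$ invariant polynomial in the entries $\Theta^a_b$. Each monomial is a wedge of $l$ curvature entries, so it carries at least $l$ factors drawn from $\theta^1, \dots, \theta^q$. Since these span a subbundle of $\Omega_X^1$ of rank $q$, any wedge of more than $q$ of them vanishes; therefore $\varphi(\Theta) \equiv 0$ identically as a form as soon as $l > q$, and a fortiori $\varphi(N_\mathcal{F}) = 0$. I expect the main technical point to be the extension step --- arranging a global smooth connection that restricts to the Bott connection along $\mathcal{F}$ while preserving the curvature vanishing on $\mathcal{F} \times \mathcal{F}$ --- whereas the conormal-factor counting in the last paragraph is the conceptual heart of the argument.
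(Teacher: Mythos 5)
The paper itself offers no proof of this statement (it is quoted from Baum--Bott), so your proposal should be judged against the classical argument, which is indeed the Chern--Weil/Bott-connection strategy you outline. Your flatness computation via the Jacobi identity is correct, and the extension step you flag as the main difficulty is actually harmless: any smooth connection agreeing with $\nabla$ in $\mathcal{F}$-directions automatically has $\Theta(V,W)=0$ for $V,W$ sections of $\mathcal{F}$, since all covariant derivatives in the curvature expression are then taken along $\mathcal{F}$. The genuine gap is in the final counting. Your extended connection $\tilde\nabla$ is only smooth, so its curvature is a complex-valued $2$-form with $(1,1)$ and $(0,2)$ components, not a section of $\Lambda^2\Omega_X^1\otimes\mathrm{End}(N_\mathcal{F})$. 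The condition $\Theta(V,W)=0$ for $V,W\in\mathcal{F}$ places each entry $\Theta^a_b$ in the ideal generated by the annihilator of $\mathcal{F}$ inside the \emph{complexified} cotangent bundle, and that annihilator is spanned by $\theta^1,\dots,\theta^q$ together with \emph{all} $(0,1)$-forms --- a bundle of rank $q+\dim X$. Your coframe list (the $\theta^i$ completed by forms dual to a frame of $\mathcal{F}$) silently omits the antiholomorphic directions, and mixed terms such as $\overline{\theta}\wedge\alpha$ with $\alpha$ dual to $\mathcal{F}$ are not killed by flatness on $\Lambda^2\mathcal{F}$. So no vanishing in degrees $l\le\dim X$ follows; if instead you run the argument honestly in the real category, you recover only Bott's real vanishing bound, degree $>2q$, not the stated sharp bound $q<l$.

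The repair, which is Bott's actual argument, is to enlarge the tangential distribution: set $E=\mathcal{F}\oplus T^{0,1}X$ inside $T_X\otimes\mathbb{C}$. This $E$ is involutive because $\mathcal{F}$ is a holomorphic integrable subbundle, and the partial connection extends to $E$ by declaring $\nabla_{\overline{V}}$ to be contraction of $\overline{\partial}_{N_\mathcal{F}}$ with $\overline{V}$; the extended partial connection is still flat along $E$ (the mixed curvature terms vanish because the Bott connection takes holomorphic sections of $N_\mathcal{F}$ to holomorphic sections, i.e.\ commutes with $\overline{\partial}$). Now the annihilator of $E$ has rank exactly $q$, spanned by the $(1,0)$-forms $\theta^1,\dots,\theta^q$, so any smooth connection extending this partial connection --- equivalently, a connection whose $(0,1)$-part is $\overline{\partial}$ and which is Bott along $\mathcal{F}$, a \emph{basic} connection --- has all curvature entries in the ideal $(\theta^1,\dots,\theta^q)$, and your conormal-factor counting then correctly yields $\varphi(\Theta)\equiv 0$ whenever $l>q$. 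In short: you must absorb the $(0,1)$-directions into the ``tangential'' side; without this the bound degrades from $l>q$ to $l>2q$, which would be too weak for the applications in this paper (e.g.\ $N_\mathcal{F}^2\equiv 0$ for a regular codimension-one foliation on a threefold uses precisely $l=2$, $q=1$).
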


\begin{rmk}\label{rmk:intersectionwithrationalcurve}
Let $X$ be a manifold and $\mathcal{F}$ a regular codimension $1$ foliation on $M$. Let $C \subset X$ be a smooth proper curve on $X$. By remark \ref{rmk:bottconnection}, if $C$ is tangent to $\mathcal{F}$, then $c_1(N_\mathcal{F}) \cdot C = 0$. Now suppose $C$ is not tangent to $\mathcal{F}$. Let 
\[\omega \in H^0(X,\Omega_X^1 \otimes N_\mathcal{F})\]
define $\mathcal{F}$. Then, since $C$ is not tangent to $\mathcal{F}$,
\[\omega_{|C} \in H^0(C,\Omega_C^1 \otimes {N_\mathcal{F}}_{|C})\]
is non-zero. This implies that $\Omega_C^1 \otimes {N_\mathcal{F}}_{|C}$ is a vector bundle on $C$ with non negative degree. Since $\deg(\Omega_C^1) = 2g(C) - 2$, where $g(C)$ is the genus of $C$, we conclude that if $C$ is not tangent to $\mathcal{F}$, then $c_1(N_\mathcal{F})\cdot C \geq 2 - 2g(C)$.

In particular, if $g(C) = 0$, then $c_1(N_\mathcal{F}) \cdot C \geq 0$ and equality holds iff $C$ is tangent to $\mathcal{F}$.
\end{rmk}
\subsubsection*{Foliated Minimal Model Program}
We state the foliated Minimal Model Program for regular codimension one foliations on $3$-folds. 

\begin{thm}[\cite{Spicer2017}]
Let $X$ be a projective manifold of dimension $3$. Let $\mathcal{F}$ be a codimension one foliation on $X$. Then there is a sequence of smooth blow-ups centered at smooth curves:
\[X \rightarrow X_1 \rightarrow \dots \rightarrow X_n,\]
such that if $\mathcal{F}_i$ is the foliation induced by $\mathcal{F}$ on $X_i$, then $\mathcal{F}_i$ is regular for each $i$, and the non-trivial fibers of $X_i \rightarrow X_{i+1}$ are tangent to $\mathcal{F}_i$. Moreover one the following holds:
\begin{itemize}
    \item[(a)] Either $K_{\mathcal{F}_n}$ is nef;
    \item[(b)] Or there exists a structure of Mori Fiber Space $X_n \rightarrow Y$, whose fibers are tangent to $\mathcal{F}_n$.
\end{itemize}\label{thm:spicer}
\end{thm}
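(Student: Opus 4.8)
The plan is to run a Minimal Model Program for the pair $(X,\mathcal{F})$, using $K_\mathcal{F}$ in place of $K_X$. The organizing identity is $K_\mathcal{F} = K_X + N_\mathcal{F}$, which holds for codimension one foliations because $\det(T_X) = \det(\mathcal{F})\otimes N_\mathcal{F}$. Together with Remark \ref{rmk:intersectionwithrationalcurve}, this controls the comparison between the foliated and classical Mori theories: for a smooth rational curve $C$ one has $N_\mathcal{F}\cdot C \geq 0$, with equality exactly when $C$ is tangent to $\mathcal{F}$, so on tangent rational curves $K_\mathcal{F}$ and $K_X$ agree numerically.

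First I would establish a foliated cone theorem, decomposing the $K_\mathcal{F}$-negative part of $\overline{NE}(X)$ into extremal rays, each spanned by a rational curve $C$. The crucial geometric input is that these extremal rational curves are tangent to $\mathcal{F}$; I would obtain this through a foliated bend-and-break, using that a tangent rational curve has $N_\mathcal{F}\cdot C = 0$ while a transverse one carries strictly positive $N_\mathcal{F}$-degree (Remark \ref{rmk:intersectionwithrationalcurve}). Once tangency is known, $K_\mathcal{F}\cdot C = K_X\cdot C < 0$ on the ray, so it is also a $K_X$-negative extremal ray, and Mori's classical contraction theorem provides a contraction $c\colon X\to X'$ of the curves in this class.

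Next I would analyze $c$ by Mori's classification of extremal contractions of smooth threefolds. Fiber-type contractions directly furnish the Mori fiber space structure of case (b), with fibers tangent to $\mathcal{F}$ since they are swept out by the tangent extremal curves. For divisorial contractions, the tangency of the contracted curves together with the regularity of $\mathcal{F}$ should rule out every type except the blow-down of a smooth curve in a smooth threefold: the Bott connection of Remark \ref{rmk:bottconnection} forces the Chern classes of $N_\mathcal{F}$ to vanish along the exceptional fibers, which is incompatible with contractions to points or with singular exceptional loci. I would then use Lemma \ref{lem:pullbackbyequidimensional} to descend $\mathcal{F}$ to a foliation $\mathcal{F}'$ on $X'$ whose pullback recovers $\mathcal{F}$.

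The technical heart, and the main obstacle I anticipate, is the preservation of regularity across such a contraction: one must verify that the descended foliation $\mathcal{F}'$ has empty singular locus, which requires a careful local analysis of $\mathcal{F}$ along the exceptional curve and of how the tangent subbundle glues over its image. Granting this, termination is immediate, since each divisorial contraction drops the Picard number $\rho$ by one. The process then halts at a smooth model $X_n$ carrying a regular foliation $\mathcal{F}_n$, at which point either no $K_{\mathcal{F}_n}$-negative extremal ray remains, so $K_{\mathcal{F}_n}$ is nef and we are in case (a), or the final extremal contraction is of fiber type, giving the Mori fiber space of case (b).
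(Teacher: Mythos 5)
This theorem is not proved in the paper at all: it is quoted verbatim from Spicer's work on foliated Mori theory (\cite{Spicer2017}) and used as a black box, so there is no in-paper argument to compare yours against. Judged on its own terms, your outline reproduces the correct architecture of Spicer's proof --- the identity $K_\mathcal{F} = K_X + c_1(N_\mathcal{F})$, the observation that tangent rational curves satisfy $N_\mathcal{F}\cdot C = 0$ so that $K_\mathcal{F}$-negativity and $K_X$-negativity coincide on them, the reduction to Mori's classification of smooth threefold contractions, and the exclusion of divisorial-to-point contractions via the Bott connection (which is essentially the paper's own Lemma \ref{lem:divtopoint}).

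However, the two steps that constitute the actual mathematical content of the theorem are both left unestablished. First, the foliated cone theorem --- that every $K_\mathcal{F}$-negative extremal ray is spanned by a rational curve \emph{tangent} to $\mathcal{F}$ --- is asserted via an unspecified ``foliated bend-and-break''; this is the deep input (going back to Bogomolov--McQuillan and Kebekus--Sol\'a Conde--Toma type results on rational curves along foliations), and without it nothing downstream works, since a $K_\mathcal{F}$-negative ray with transverse spanning curve need not be $K_X$-negative and admits no contraction a priori. Second, you explicitly flag but do not resolve the preservation of regularity of the induced foliation after blowing down the exceptional divisor to a smooth curve; this requires a genuine local computation (the exceptional divisor is a ruled surface whose rulings are tangent to $\mathcal{F}$, and one must show the descended distribution is saturated and a subbundle along the center). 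As it stands the proposal is a faithful roadmap to Spicer's theorem rather than a proof of it.
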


In the case (b) of the theorem above, we have a classification by Mori of the possibilities for $X_n \rightarrow Y$:

\begin{thm}[{{\cite[Theorem 3.3, Theorem 3.5]{mori}}}]
Let $X$ be a projective manifold of dimension $3$. Let $R \subset \overline{NE}(X)$ be a $K_X$-negative extremal ray, and consider $\varphi\colon X \rightarrow Y$ the contraction associated to $R$. Then either $\varphi$ contracts an irreducible divisor $D$ or $\varphi$ is a fibration by Fano varieties and $Y$ is smooth. Moreover, we have the following cases:
\begin{enumerate}
    \item If $\varphi$ is divisorial, then one of the following holds:
    \begin{itemize}
        \item[(a)] $\varphi$ is the blow-up of a smooth curve $C$ on $Y$ and $Y$ is smooth;
        \item[(b)] $Q=\varphi(D)$ is a point, $Y$ is smooth, $D \cong \mathbb{P}^2$ and $\mathcal{O}_D(D) \cong \mathcal{O}_\mathbb{P}(-1)$;
        \item[(c)] $Q=\varphi(D)$ is a point, $D \cong \mathbb{P}^1 \times \mathbb{P}^1$, $\mathcal{O}_D(D)$ is of degree $(-1,-1)$ and $s \times \mathbb{P}^1 \sim \mathbb{P}^1 \times t$ on $X$ ($s,t \in \mathbb{P}^1$);
        \item[(d)] $Q=\varphi(D)$ is a point, $D$ is isomorphic to an irreducible reduced singular quadric in $\mathbb{P}^3$, $\mathcal{O}_D(D) \cong \mathcal{O}_D \otimes \mathcal{O}_\mathbb{P}(-1)$; or
        \item[(e)] $Q=\varphi(D)$ is a point, $D \cong \mathbb{P}^2$ and $\mathcal{O}_D(D) \cong \mathcal{O}_\mathbb{P}(-2)$.
    \end{itemize}
    \item If $\varphi$ is a Mori fibration, then $Y$ is smooth and one of the following holds:
    \begin{itemize}
        \item[(a)] $\dim(Y) = 2$ and for every $y \in Y$, $\varphi^{-1}(y)$ is a conic in $\mathbb{P}^2$;
        \item[(b)] $\dim(Y) = 1$ and $-K_X$ is relatively ample; or
        \item[(c)] $\dim(Y) = 0$ and $X$ is Fano.
    \end{itemize}
    Moreover in case (a), the discriminant $\Delta = \{y \in Y \mid \varphi^{-1}(y) \text{ is singular}\}$ of $\varphi$ has only ordinary double points as singularities, and $\varphi^{-1}(y)$ is a double line iff $y$ is a singular point of $\Delta$. In case (b), we have $1 \leq (K_{\varphi^{-1}(y)})^2 \leq 6$ or $(K_{\varphi^{-1}(y)})^2 = 8,9$; if $(K_{\varphi^{-1}(y)})^2 = 9$, then $\varphi$ is a $\mathbb{P}^2$-bundle; if $(K_{\varphi^{-1}(y)})^2 = 8$, then $X$ is embedded in a $\mathbb{P}^3$-bundle $P$ over $Y$ such that, for all $y \in Y$, $\varphi^{-1}(y)$ is an irreducible reduced quadric of $\mathbb{P}^3$.
\end{enumerate} \label{thm:mori}
\end{thm}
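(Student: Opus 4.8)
The plan is to reconstruct Mori's program for smooth projective threefolds, since the statement is precisely his classification of extremal contractions. The foundation is the production of rational curves spanning $R$. First I would invoke bend-and-break: starting from any curve $C$ with $K_X \cdot C < 0$, one deforms $C$ while fixing a point, and since a projective variety cannot carry a positive-dimensional family of curves through a fixed point without the members degenerating, the deformation breaks off a rational component. To force the deformation space to be large enough, one reduces modulo a prime $p$ and composes with Frobenius to inflate the degree, yielding $\dim_{[f]} \operatorname{Mor}(\mathbb{P}^1, X) \geq -K_X \cdot f_*[\mathbb{P}^1] + \dim X$. This produces, through any point of the locus swept out by $R$, a rational curve $C$ with $0 < -K_X \cdot C \leq \dim X + 1 = 4$, and by the cone theorem $R$ is generated by such a curve of minimal degree. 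I would then construct the contraction $\varphi\colon X \to Y$ onto a normal projective variety with $\varphi_*\mathcal{O}_X = \mathcal{O}_Y$ contracting exactly the curves whose class lies in $R$, via a nef supporting divisor $H$ with $H^\perp \cap \overline{NE}(X) = R$ and base-point-freeness of $|mH|$.

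With $\varphi$ in hand, let $E$ be the locus covered by the minimal extremal curves and let $l(R) = \min\{-K_X \cdot C\}$ be the length. The deformation bound gives that the family of such curves through a fixed general point of $E$ has dimension at least $l(R) - 2$, and the fibre-locus inequality bounds the dimension of the non-trivial fibres of $\varphi$ from below. This splits the problem into the fibre type case ($E = X$, so $\dim Y < 3$) and the divisorial case ($E = D$ a prime divisor). In the fibre type case I would stratify by $\dim Y \in \{0,1,2\}$: for $\dim Y = 0$, $-K_X$ is ample and $X$ is Fano; for $\dim Y = 1$, the general fibre is a smooth del Pezzo surface $S$ with $-K_X|_S = -K_S$ ample, so $1 \leq K_S^2 \leq 9$ with $K_S^2 = 7$ excluded (a degree-$7$ del Pezzo would force relative Picard number greater than one), and the $\mathbb{P}^2$-bundle and quadric-bundle structures in the cases $K_S^2 = 9, 8$ follow from the relative anticanonical embedding; for $\dim Y = 2$, the relative dimension is one and $-K_X$ is $\varphi$-ample, so the general fibre is $\mathbb{P}^1$ and $\omega_\varphi^{-1}$ embeds $X$ into a $\mathbb{P}^2$-bundle over $Y$ as a conic bundle, the discriminant and double-line statements following from the degeneration types of plane conics.

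For the divisorial case I would separate $\dim\varphi(D) = 1$ from $\dim\varphi(D) = 0$. When $\varphi$ contracts $D$ to a curve, the length is $l(R) = 1$, the extremal curves are the fibres of a $\mathbb{P}^1$-bundle structure on $D$, and a normal-bundle computation identifies $\varphi$ with the blow-up of a smooth curve in a smooth $Y$ (type (a)). When $\varphi$ contracts $D$ to a point $Q$, I would analyse $D$ as a Fano surface swept out by the extremal curves, using adjunction $K_D = (K_X + D)|_D$ together with the bend-and-break bound $l(R) \leq \dim X - 1 = 2$ to pin down $D$ and the conormal data $\mathcal{O}_D(D)$. The four resulting possibilities, $D \cong \mathbb{P}^2$ with $\mathcal{O}_D(D) = \mathcal{O}(-1)$ or $\mathcal{O}(-2)$, $D \cong \mathbb{P}^1 \times \mathbb{P}^1$ with $\mathcal{O}_D(D)$ of bidegree $(-1,-1)$ (both rulings being contracted, hence numerically equivalent in $X$), and $D$ a singular quadric with $\mathcal{O}_D(D) = \mathcal{O}_D(-1)$, correspond to cases (b)--(e); the singularity type of $Y$ at $Q$ is then read off from the contraction of $D$.

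The hard part will be twofold. First, the very existence of the generating rational curves rests entirely on the characteristic-$p$ bend-and-break argument: there is no way around passing to positive characteristic and exploiting Frobenius to guarantee that the deformation space is large, and controlling the reduction is the technical core of the whole proof. Second, the fine local classification into the five divisorial subtypes (b)--(e) is where essentially all of the casework concentrates, since one must compute the restricted normal bundle of the extremal curves, determine the precise isomorphism type of $D$ and the line bundle $\mathcal{O}_D(D)$, and verify the singularity of $Y$ at $Q$, each requiring delicate deformation-theoretic and intersection-theoretic bookkeeping. I expect the conic-bundle discriminant statement in case 2(a) and the handling of the boundary degrees $K_S^2 = 8, 9$ in case 2(b) to demand comparable care, but the divisorial subtyping is the genuine bottleneck.
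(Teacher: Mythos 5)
This statement is not proved in the paper at all: it is quoted verbatim from Mori's article (\cite[Theorems 3.3, 3.5]{mori}) as a known classification, so the only meaningful comparison is with Mori's original argument. Measured against that, your outline is faithful: characteristic-$p$ bend-and-break with Frobenius to produce extremal rational curves of anticanonical degree at most $\dim X + 1$, the cone theorem, length and fibre-dimension estimates to split fibre type from divisorial type, and then the case analysis you describe is exactly the architecture of Mori's proof, and your identification of the divisorial subtyping (b)--(e) as the bottleneck is accurate. One anachronism worth flagging: you construct the contraction $\varphi$ from a supporting nef divisor $H$ with $H^\perp \cap \overline{NE}(X) = R$ via base-point-freeness of $|mH|$, which is the later Kawamata--Shokurov contraction theorem; Mori in 1982 did not have this and instead established the existence of the contraction in dimension $3$ case by case, using explicit contraction criteria (Castelnuovo/Fujiki--Nakano type arguments for the divisorial cases, relative anticanonical constructions for the fibre cases) intertwined with the classification itself. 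Using the modern base-point-free theorem is a legitimate simplification for smooth threefolds, so this is a different (and cleaner) route to the same statement, not a gap. The remaining caveat is one of depth rather than correctness: as written your text is a plan, and the assertions that carry the real weight --- smoothness of $Y$ in cases 1(a), 1(b) and in fibre type, the exclusion of degree-$7$ del Pezzo fibres (your parenthetical needs the monodromy argument on the configuration of $(-1)$-curves made precise), the normal-bundle computation identifying case 1(a) as a blow-up, and the discriminant/double-line statement for conic bundles --- each require the detailed deformation-theoretic and intersection-theoretic analysis that occupies the bulk of Mori's paper.
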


\subsubsection*{Nef reduction of anticanonical bundles}

Another tool we will use in our proof is the classification of the nef reduction map of the anticanonical bundle of rationally connected threefolds with nef anticanonical bundle, given by Bauer and Peternell. 

\begin{thm}[{{\cite[Theorem 2.1]{bauer2004}}}]
Let $X$ be a projective threefold with $-K_X$ nef. Then there exists a morphism $f\colon X \rightarrow B$ to a normal projective variety $B$ such that 
\begin{enumerate}
\item $-K_X$ is numerically trivial on all fibers of $f$;
\item for $x \in X$ general and every irreducible curve $C$ passing through $x$ such that $\dim f(C) > 0$, we have $-K_X \cdot C > 0$.
\end{enumerate}
\label{thm:3fold}
\end{thm}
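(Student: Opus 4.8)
The plan is to reduce the statement to the general theory of nef reductions and then to promote the resulting rational map to a genuine morphism using the nefness of $-K_X$ together with the structure of threefold fibrations. First I would apply the general existence theorem for nef reductions to the nef line bundle $L = -K_X$: for any nef line bundle on a normal projective variety there is an almost holomorphic dominant rational map $f\colon X \dashrightarrow B$ with connected fibres, unique up to birational equivalence of $B$, such that $L$ is numerically trivial on the general fibre and such that, for $x$ general and any irreducible curve $C \ni x$ with $\dim f(C) > 0$, one has $L \cdot C > 0$. This already yields properties (1) and (2); the entire content of the theorem is therefore the claim that, when $\dim X = 3$, the map $f$ can be taken to be defined everywhere.

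I would then argue according to the nef dimension $n = \dim B \in \{0,1,2,3\}$. When $n = 0$ the bundle $-K_X$ is numerically trivial and $f$ is the constant map to a point, which is a morphism. When $n = 3$ the general fibre is a point, so one may take $B = X$ and $f = \mathrm{id}$: property (2) of the abstract nef reduction says exactly that through a general point there is no $-K_X$-trivial curve, so every irreducible curve $C$ through a general $x$ satisfies $-K_X \cdot C > 0$, as required. This leaves the two genuine cases $n = 1$ and $n = 2$.

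For $n = 1$ the general fibre $F$, taken over a general point of $B$ and hence lying in the locus where $f$ is already a morphism, is a smooth surface; since $F$ is a fibre of a map to a curve, $\mathcal{O}_X(F)|_F$ is trivial and adjunction gives $K_F \equiv K_X|_F \equiv 0$, so $F$ is a minimal surface of Kodaira dimension zero. For $n = 2$ the general fibre is instead an irreducible curve $C$ with $-K_X \cdot C = 0$, and these curves sweep out $X$ in a covering family. In either case I would resolve the indeterminacy by $\mu\colon \tilde X \to X$ and, after Stein factorisation, obtain a morphism $\tilde f\colon \tilde X \to B$ with connected fibres, and then show that the indeterminacy locus of $f$ is in fact empty. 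A point of indeterminacy would, after blow-up, produce curves $\gamma$ contained in the $\mu$-exceptional locus with $\mu^*(-K_X)\cdot \gamma = 0$ but $\dim \tilde f(\gamma) > 0$; I would run a relative MMP for $\tilde f$ over $B$, or equivalently analyse the covering family of $-K_X$-trivial fibres through the relative cone theorem, to contract $\mu$ and descend $\tilde f$ to a morphism on $X$ whose fibres stay $-K_X$-trivial.

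The step I expect to be the main obstacle is exactly this last one: upgrading the almost holomorphic rational map to an everywhere-defined morphism. The difficulty is that resolving the indeterminacy introduces $\mu^*(-K_X)$-trivial curves moving in the $B$-direction, and one must rule these out using only the nefness of $-K_X$ and the defining positivity (2) of the nef reduction. Concretely, controlling the limits of the $K$-trivial surface fibres when $n=1$, and of the covering family of $-K_X$-trivial curves when $n=2$, and showing that these organise into a proper equivalence relation with quotient $B$, is where the specific geometry of threefolds with nef anticanonical bundle — the existence of extremal contractions and the classification of $K$-trivial surfaces — has to be brought in.
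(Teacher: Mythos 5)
This statement is quoted in the paper from Bauer--Peternell (\cite[Theorem 2.1]{bauer2004}) and is not proved there, so there is no in-paper argument to compare against; I can only assess your proposal on its own terms. Your framing is the right one: the abstract nef reduction theorem produces an almost holomorphic dominant rational map $f\colon X \dashrightarrow B$ with connected fibres satisfying the triviality and positivity properties on \emph{general} fibres, and the substance of the theorem is (i) that for a threefold with $-K_X$ nef this map is an honest morphism, and (ii) that numerical triviality then holds on \emph{all} fibres, not just general ones. Your reductions for nef dimension $0$ and $3$ are correct, and your identification of the general fibre as a minimal surface with $K_F\equiv 0$ when $n=1$ is also fine.

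The gap is that essentially all of the content of the theorem sits in the step you defer: upgrading the almost holomorphic map to a morphism. ``Run a relative MMP over $B$'' or ``analyse the covering family through the relative cone theorem'' is a plan, not an argument: a relative MMP starting from a resolution $\tilde X$ over $B$ produces some birational model of $\tilde X$, and there is no a priori reason its output is $X$ itself, nor that the contractions it performs only undo $\mu$. The actual proof requires the specific geometry you allude to only at the end --- for $n=2$ one must show that the covering family of $-K_X$-trivial curves defines a genuine equivalence relation, with no curve through a general point meeting two distinct fibres, and for $n=1$ one must control the limits of the $K$-trivial surface fibres; none of this is supplied. Separately, even granting holomorphicity, your argument only yields property (1) on general fibres, whereas the statement asserts numerical triviality of $-K_X$ on \emph{all} fibres; for a nef line bundle trivial on the general fibre of a fibration this does not follow formally and needs its own argument. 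As it stands the proposal is a correct reduction to, plus an accurate description of, the hard step, but not a proof of it.
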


This result is more precise if $X$ is rationally connected with $K_X^2 \equiv 0$. We will show that this condition $K_X^2 \equiv 0$ always happens if $X$ admits a regular foliation of codimension one with nef canonical bundle. Moreover we will only be concerned with the cases (1) and (2a) of theorem \ref{thm:mori}. Under these conditions, we have the following theorem:

\begin{thm}[{{\cite[Corollary 2.4]{bauer2004}}}]
Let $X$ be a smooth rationally connected  projective threefold with $-K_X$ nef. Suppose that $K_X^2 \equiv 0$ and that  $X$ is as in cases (1) or (2a) of theorem \ref{thm:mori}. Then $-K_X$ induces a $K3$-fibration $f\colon X \rightarrow \mathbb{P}^1$ and $-K_X \cong f^*(\mathcal{O}_{\mathbb{P}^1}(1))$. \label{thm:dimnum1}
\end{thm}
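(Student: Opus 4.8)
The plan is to study the nef reduction $f\colon X \to B$ of $-K_X$ furnished by theorem \ref{thm:3fold} and to prove that $B$ is a curve over which the general fibre is a K3 surface. I begin by recording the numerical input: from $K_X^2 \equiv 0$ we get $(-K_X)^2 \equiv 0$, while $-K_X \not\equiv 0$ because a rationally connected variety is uniruled, so that $K_X$ is not pseudo-effective. Hence the nef divisor $-K_X$ has numerical dimension $1$; in particular $(-K_X)^3 = 0$ and $-K_X$ is not big. I will also use repeatedly that a rationally connected manifold satisfies $h^1(X,\mathcal{O}_X) = h^2(X,\mathcal{O}_X) = 0$, is simply connected, and has torsion-free Picard group, and that $B$, being the image of $X$, is rationally connected.

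The heart of the argument is to show that $\dim B = 1$. By theorem \ref{thm:3fold}(1) we may write $-K_X \equiv f^{*}A$ for a nef class $A$ on $B$, and theorem \ref{thm:3fold}(2) says that $A$ is positive on every curve of $B$ through a general point. If $\dim B = 0$ then $-K_X \equiv 0$, contradicting $\nu(-K_X)=1$. If $\dim B \in \{2,3\}$, then $(-K_X)^2 \equiv 0$ together with the injectivity of $f^{*}$ gives $A^2 \equiv 0$ on $B$, so $A$ is a nef class of numerical dimension one on the rationally connected variety $B$; since $\dim B \ge 2$, such a class is semiample and therefore contracts a covering family of curves, i.e. there are curves $C \subset B$ through a general point with $A\cdot C = 0$. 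Their preimages are curves in $X$ mapping onto positive-dimensional subsets of $B$ on which $-K_X$ is trivial, contradicting theorem \ref{thm:3fold}(2). This forces $\dim B = 1$. I expect this step -- namely ruling out the ``Mumford phenomenon'' of a nef class that is numerically but not effectively of dimension one -- to be the main obstacle, and it is precisely here that the structural hypotheses (cases (1) or (2a) of theorem \ref{thm:mori}), through the rationality they impose on $B$, are needed.

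Now $B$ is a rationally connected curve, hence $B \cong \mathbb{P}^1$. As $\mathrm{Pic}(X)$ is torsion-free the numerical equality $-K_X \equiv f^{*}\mathcal{O}_{\mathbb{P}^1}(\lambda)$ (with $\lambda \ge 1$) upgrades to $-K_X \cong f^{*}\mathcal{O}_{\mathbb{P}^1}(\lambda)$. Let $S$ be a general fibre. Since $S = f^{*}(\mathrm{pt})$ we have $\mathcal{O}_X(S)_{|S}\cong \mathcal{O}_S$ and $(K_X)_{|S}\cong f^{*}\mathcal{O}_{\mathbb{P}^1}(-\lambda)_{|S}\cong \mathcal{O}_S$, so adjunction gives $K_S = (K_X+S)_{|S}\cong \mathcal{O}_S$. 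Thus $K_S$ is trivial, which already excludes Enriques and bielliptic fibres (whose canonical class is non-trivial torsion) and leaves $S$ either a K3 or an abelian surface. To exclude the abelian case I argue that if $S$ were abelian then $\Omega^1_{X/\mathbb{P}^1}$ would be trivial on fibres, hence the pullback of the Hodge bundle $\mathbb{E}=f_{*}\Omega^1_{X/\mathbb{P}^1}$, a nef (by Fujita semipositivity) rank-two bundle on $\mathbb{P}^1$ of degree $2-\lambda \le 1$. No such bundle can be ample, so by the Fujita decomposition $\mathbb{E}$ has a trivial summand; this flat part yields monodromy-invariant $1$-cohomology on the fibres and, by the global invariant cycle theorem, a non-zero class in $H^1(X,\mathbb{C})$, contradicting the simple connectedness of $X$. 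Hence $S$ is a K3 surface.

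It remains to identify $\lambda$. From $K_{X/\mathbb{P}^1}=K_X-f^{*}K_{\mathbb{P}^1}\cong f^{*}\mathcal{O}_{\mathbb{P}^1}(2-\lambda)$ we get $f_{*}\omega_{X/\mathbb{P}^1}\cong \mathcal{O}_{\mathbb{P}^1}(2-\lambda)$, a line bundle since $S$ is K3; Fujita semipositivity forces $2-\lambda \ge 0$, so $\lambda \in \{1,2\}$. If $\lambda = 2$ then $\omega_{X/\mathbb{P}^1}\cong \mathcal{O}_X$ and the degree-zero Hodge bundle makes the K3-family isotrivial; the resulting nowhere-vanishing relative $2$-form then extends (vanishing Kodaira--Spencer) to a global holomorphic $2$-form on $X$, contradicting $h^{2,0}(X)=0$. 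Therefore $\lambda = 1$ and $-K_X \cong f^{*}\mathcal{O}_{\mathbb{P}^1}(1)$, so $f$ is the desired K3-fibration. The two points requiring the most care are the determination of the nef dimension in the second paragraph and, secondarily, the exclusion of abelian fibres in the third.
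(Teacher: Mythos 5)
This statement is quoted from Bauer--Peternell \cite[Corollary 2.4]{bauer2004}; the paper offers no proof of its own, so your attempt can only be measured against the cited source, whose argument is a case-by-case analysis of the Mori contraction on $X$. Your later steps are essentially sound in outline: adjunction giving $K_S\cong\mathcal{O}_S$ on the general fibre, the exclusion of abelian fibres via the degree of the Hodge bundle and $h^1(X,\mathcal{O}_X)=0$ (though the Leray spectral sequence gives this more directly than the invariant cycle theorem), and the exclusion of $\lambda=2$ (though a trivialising section of $\omega_{X/\mathbb{P}^1}$ is a \emph{relative} $2$-form, and promoting it to a class contradicting $h^{2,0}(X)=0$ requires the global invariant cycle theorem, not just ``vanishing Kodaira--Spencer'').

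The genuine gap is in the step you yourself single out as the heart of the proof, the determination $\dim B=1$. Two assertions there fail. First, theorem \ref{thm:3fold}(1) only says that $-K_X$ is numerically trivial on the fibres of the nef reduction; it does not give $-K_X\equiv f^{*}A$ for a class $A$ on $B$ when $\dim B\geq 2$, and no argument is supplied. Second, and more seriously, the claim that a nef class of numerical dimension one on a rationally connected surface is semiample --- and hence contracts a covering family of curves --- is false: on the blow-up of $\mathbb{P}^2$ at nine very general points, $-K$ is nef with $(-K)^2=0$, is not semiample, and the only curve it annihilates is the single anticanonical elliptic curve, which is not a covering family. So the contradiction with theorem \ref{thm:3fold}(2) does not materialise, and in general the inequality between nef dimension and numerical dimension goes the wrong way ($n(L)\geq\nu(L)$), so $\nu(-K_X)=1$ by itself cannot force $n(-K_X)=1$. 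Note also that you acknowledge the structural hypotheses (cases (1) or (2a) of theorem \ref{thm:mori}) as essential but never actually use them: ``the rationality they impose on $B$'' is automatic since $B$ is dominated by $X$. In Bauer--Peternell's actual proof these hypotheses are the engine --- one descends $-K_X$ along the blow-down in case (1) and computes against the conic fibres in case (2a) --- and without some such concrete input the reduction to $\dim B=1$ is not established.
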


\subsubsection*{Isotrivial families}

In the proof of our main result, we will need to show that, under certain conditions, the $K3$ fibration obtained in theorem \ref{thm:dimnum1} is isotrivial. We begin with the definition:

\begin{defi}
Let $f\colon X \rightarrow Y$ be a surjective morphism between normal projective varieties with connected general fiber $F$. We say $f$ is birationally isotrivial if $X \times_Y {\rm Spec} {\overline{\mathbb{C}(Y)}}$ is birational to $F \times {\rm Spec} {\overline{\mathbb{C}(Y)}}$, where $\mathbb{C}(Y)$ is the field of rational functions on $Y$. 
\end{defi}

The first result shows that under some conditions, a smooth family over $\mathbb{P}^1$ is isotrivial:

\begin{thm}[{{\cite[Theorem 0.1]{viehwegzuo}}}]\label{thm:viehwegzuo}
Suppose $f\colon X \rightarrow \mathbb{P}^1$ is a surjective morphism with connected general fiber $F$, where $X$ is a projective manifold. Suppose that $f$ is not birationally isotrivial, and that $F$ has a minimal model $F'$ with $K_{F'}$ semi-ample. Then $f$ has at least  three singular fibers.
\end{thm}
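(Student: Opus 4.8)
The plan is to argue by contradiction, converting the non-isotriviality of $f$ into positivity of a subsheaf of a symmetric power of the logarithmic cotangent bundle of the base, and then to collide this positivity with the fact that $\mathbb{P}^1$ punctured at at most two points is not of log general type. Concretely, suppose $f$ has at most two singular fibers, and let $S \subset \mathbb{P}^1$ be the reduced discriminant, so that $|S| \le 2$ and $f$ is smooth and proper over $U = \mathbb{P}^1 \setminus S$. The only fact about the base I will use is that
\[
\Omega^1_{\mathbb{P}^1}(\log S) \cong \mathcal{O}_{\mathbb{P}^1}(|S|-2),
\]
so that $\deg \operatorname{Sym}^N \Omega^1_{\mathbb{P}^1}(\log S) = N(|S|-2) \le 0$; in particular no symmetric power of $\Omega^1_{\mathbb{P}^1}(\log S)$ can admit an invertible subsheaf of positive degree.

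First I would use the hypothesis that $F$ has a minimal model $F'$ with $K_{F'}$ semi-ample to place the family in the range where Viehweg's positivity theory applies. After a suitable base change and birational modification, or by passing to a relative canonical model over $\mathbb{P}^1$, one arranges that the fibers are minimal with semi-ample canonical bundle; then the direct image sheaves $f_*\omega_{X/\mathbb{P}^1}^{\otimes \nu}$ are weakly positive, hence nef on the curve $\mathbb{P}^1$. Since $f$ is not birationally isotrivial, the induced moduli map has maximal variation $\operatorname{Var}(f) = 1 = \dim \mathbb{P}^1$, i.e. the family genuinely moves in moduli.

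The core of the proof, and the step I expect to be by far the hardest, is to manufacture honest positivity out of this variation. Following the construction of the Viehweg-Zuo sheaf, one attaches to the variation of Hodge structure of the family its deformation Higgs bundle $(E,\theta)$ on $(\mathbb{P}^1,S)$, with Higgs field $\theta\colon E^{p,q} \to E^{p-1,q+1}\otimes \Omega^1_{\mathbb{P}^1}(\log S)$, and compares the iterated Kodaira-Spencer map with $\theta$. Maximal variation forces this comparison to be non-degenerate on a suitable graded piece, and feeding in the weak positivity of $f_*\omega_{X/\mathbb{P}^1}^{\otimes \nu}$ yields, for some $N$, a big invertible subsheaf
\[
\mathcal{A} \hookrightarrow \operatorname{Sym}^N \Omega^1_{\mathbb{P}^1}(\log S), \qquad \deg \mathcal{A} > 0 .
\]
All the genuine difficulty lives here: building the Higgs bundle, establishing the commutativity of the comparison, and extracting bigness are precisely the Viehweg-Zuo machinery, and the semi-ampleness of $K_{F'}$ is exactly what makes the underlying positivity results available. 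Granting this, the contradiction is immediate, since $\deg \mathcal{A} > 0$ would force $N(|S|-2) \ge \deg \mathcal{A} > 0$, hence $|S| > 2$, against $|S| \le 2$. Therefore $f$ must have at least three singular fibers.
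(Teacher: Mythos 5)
This statement is not proved in the paper at all: it is imported verbatim as \cite[Theorem~0.1]{viehwegzuo}, so there is no internal proof to compare against. Your outline is a faithful reconstruction of the strategy of the cited source. The bookkeeping on the base is correct: $\deg\Omega^1_{\mathbb{P}^1}(\log S)=|S|-2$, so $\operatorname{Sym}^N\Omega^1_{\mathbb{P}^1}(\log S)$ has degree $N(|S|-2)\le 0$ when $|S|\le 2$ and admits no invertible subsheaf of positive degree; and the reduction of ``not birationally isotrivial over a curve'' to $\operatorname{Var}(f)=1$ is the right starting point. The contradiction you draw at the end is valid given the existence of a positive-degree invertible subsheaf $\mathcal{A}\hookrightarrow\operatorname{Sym}^N\Omega^1_{\mathbb{P}^1}(\log S)$.

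That said, as a proof this is a roadmap rather than an argument: the single step you ``grant'' yourself --- the construction of the deformation Higgs bundle, the comparison with the iterated Kodaira--Spencer maps, and the extraction of a \emph{big} invertible subsheaf of $\operatorname{Sym}^N\Omega^1_{\mathbb{P}^1}(\log S)$ from weak positivity of $f_*\omega_{X/\mathbb{P}^1}^{\otimes\nu}$ under maximal variation --- is the entire content of Viehweg--Zuo's paper, and nothing in your write-up establishes it. You correctly identify where the hypothesis on $K_{F'}$ being semi-ample enters (it is what makes the positivity of the relevant direct images, and hence the bigness statement, available), but you do not supply it. So the verdict is: correct identification of the known proof architecture of an externally cited theorem, with the essential difficulty acknowledged and deferred rather than resolved; this is appropriate for explaining why the citation does what the paper needs, but it is not an independent proof of the statement.
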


The second result shows that under some conditions, an isotrivial family of surfaces over a curve is trivial after an \'etale change of base:

\begin{lem}[{{\cite[Lemma 1.6]{oguisoviehweg}}}]\label{lem:oguisoviehweg}
Let $f\colon X \rightarrow C$ be a smooth projective family of minimal surfaces of non-negative Kodaira dimension. Then $f$ is birationally isotrivial if, and only if, there exists a finite étale cover $C'\rightarrow C$ and a surface $F$ with
\[X \times_C C' \cong F\times C'.\]
\end{lem}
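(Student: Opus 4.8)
The plan is to prove the two implications separately, the reverse one being formal and the forward one carrying all the weight. For ($\Leftarrow$), suppose $X \times_C C' \cong F \times C'$ for some finite étale $C' \to C$. Since $C' \to C$ is finite, $\mathbb{C}(C')$ is a finite extension of $\mathbb{C}(C)$ and $\overline{\mathbb{C}(C')} = \overline{\mathbb{C}(C)}$, so base-changing the isomorphism to $\mathrm{Spec}\,\overline{\mathbb{C}(C)}$ yields $X \times_C \mathrm{Spec}\,\overline{\mathbb{C}(C)} \cong F \times \mathrm{Spec}\,\overline{\mathbb{C}(C)}$, which is in particular a birational equivalence; hence $f$ is birationally isotrivial. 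For ($\Rightarrow$), I would first upgrade birational isotriviality to honest isotriviality: the geometric generic fibre $X_{\overline{\eta}} := X \times_C \mathrm{Spec}\,\overline{\mathbb{C}(C)}$ is a smooth minimal surface of non-negative Kodaira dimension, birational by hypothesis to $F_{\overline{\eta}}$, and since a birational map between smooth projective minimal surfaces that are not uniruled is an isomorphism, we get $X_{\overline{\eta}} \cong F_{\overline{\eta}}$. Spreading out, every fibre over a dense open $C^\circ \subset C$ is isomorphic to $F$, so $X|_{C^\circ} \to C^\circ$ is a torsor under $\underline{\mathrm{Aut}}(F) \times C^\circ$ in the étale topology. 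Trivializing this torsor after a finite étale base change is precisely what we must achieve; the resulting isomorphism will afterwards extend over the finitely many missing fibres by uniqueness of minimal models, since the induced birational map between the smooth minimal special fibres is again an isomorphism.

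The structure of $\underline{\mathrm{Aut}}(F)$ governs the difficulty. By the Nishi--Matsumura theorem, $\kappa(F) \ge 0$ forces the identity component $A := \mathrm{Aut}^0(F)$ to be an abelian variety, and we have an extension
\[ 1 \longrightarrow A \longrightarrow \mathrm{Aut}(F) \longrightarrow \Gamma \longrightarrow 1 \]
with $\Gamma$ discrete. I would first dispose of $\Gamma$. Because $f$ is projective there is a relatively ample class, i.e. a monodromy-invariant section of $R^2 f_* \mathbb{Z}$ restricting to an ample class $[L] \in \mathrm{NS}(F)$, so the monodromy $\pi_1(C^\circ) \to \Gamma$ takes values in the stabilizer of $[L]$. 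Since the stabilizer of a class of positive self-intersection in the orthogonal group of the Néron--Severi lattice is finite, and the kernel of $\mathrm{Aut}(F) \to O(\mathrm{NS}(F))$ has identity component $A$ and finite component group, the discrete monodromy is finite. Passing to the finite étale cover $C_1 \to C$ corresponding to its kernel reduces the structure group of the torsor to $A$.

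When $A = 0$ --- the case of $K3$, Enriques, general type, and properly elliptic surfaces with no translation action --- this already concludes: over $C_1$ the torsor has trivial structure group, so $X \times_C C_1 \cong F \times C_1$, and I extend over $C$ as above. The remaining, genuinely hard, case is $A \ne 0$ (abelian surfaces, bielliptic surfaces, and isotrivial products such as $E \times B$), where $X \times_C C_1 \to C_1$ is a torsor under the abelian scheme $A \times C_1$ and I must trivialize it by a further finite étale cover. Here the naive expectation fails: a general torsor under an abelian scheme over a complete curve is not killed by any finite étale base change, because its period can be a non-torsion point of a positive-dimensional intermediate torus. The way out is to use projectivity a second time: the total space $X$ being projective forces the period of this family of torsors to be a torsion class --- equivalently, by the Enriques--Kodaira classification a smooth projective isotrivial elliptic or abelian-surface family is abelian/bielliptic and becomes a product after a finite étale cover (bielliptic surfaces are free finite quotients of products of elliptic curves). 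Trivializing the torsor by pulling back along an isogeny-type finite étale cover $C_2 \to C_1$ that kills the torsion period then gives $X \times_C C_2 \cong F \times C_2$ with $C_2 \to C$ finite étale.

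The step I expect to be hardest is exactly this last one: showing that for abelian and bielliptic fibres the torsor is trivialized by a finite étale --- not merely finite --- base change. This is where the projectivity of the total space must be converted into torsionness of the period, via a polarization or normal-function argument or by invoking the surface classification, and it is the only point at which the hypothesis that the fibres are honest projective surfaces, rather than arbitrary Kähler or analytic ones, is essential; the disposal of the discrete part $\Gamma$ and the reduction of birational to honest isotriviality, by contrast, I expect to be routine once the minimal-model and Nishi--Matsumura inputs are in place.
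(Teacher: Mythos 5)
The paper does not actually prove this lemma: it is imported verbatim from Oguiso--Viehweg \cite[Lemma 1.6]{oguisoviehweg} and used as a black box, so there is no in-paper argument to compare yours against line by line. Assessed on its own terms, your skeleton is the standard and correct one: the ($\Leftarrow$) direction is formal; uniqueness of minimal models in Kodaira dimension $\geq 0$ upgrades birational isotriviality to fibrewise isomorphism; the Isom-scheme $\mathrm{Isom}(F \times C, X)$ is a torsor under $\mathrm{Aut}(F)$, whose identity component $A$ is an abelian variety by Nishi--Matsumura; the discrete part of the monodromy is finite because it fixes the class of a relatively ample bundle (your N\'eron--Severi argument is a correct instance of Lieberman's finiteness theorem, which is the clean reference here); and one is left with a torsor under the constant abelian scheme $A \times C_1$.

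Two points need repair. First, you construct your covers over a dense open $C^\circ \subset C$ and propose to ``extend over the finitely many missing fibres'' at the end; but the conclusion requires $C' \to C$ finite \'etale over the \emph{complete} curve, and a cover \'etale over $C^\circ$ may well ramify over $C \setminus C^\circ$ --- extending the isomorphism of families does not fix this. The remedy is to note that, by Matsusaka--Mumford (birational maps between smooth families of non-ruled minimal surfaces extend to isomorphisms), the Isom-scheme is \emph{proper} over $C$, so its image is all of $C$: every fibre, not just the general one, is isomorphic to $F$, the torsor lives over $C$ itself, and all your covers are then automatically \'etale over $C$. Second, the step you flag as hardest is actually standard and needs neither normal functions nor the Enriques--Kodaira classification: the torsor $P$ is an algebraic variety, its generic fibre is a torsor under $A$ over the field $\mathbb{C}(C)$, and $H^1(\mathbb{C}(C), A)$ is torsion because Galois cohomology of a profinite group is torsion (alternatively, $P$ is projective over $C$, so a hyperplane-section multisection of degree $d$ kills $d$ times the class). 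Given torsion of order $n$, the Kummer sequence $0 \to A[n] \to A \to A \to 0$ lifts the class to the \'etale cohomology group $H^1(C, A[n])$ with finite coefficients, and the associated finite \'etale cover trivializes the torsor generically; the trivializing rational section then extends since $A$ is proper. Your intuition that non-torsion periods obstruct \'etale trivialization is correct only for \emph{non-algebraic} analytic torsors (Kodaira's elliptic fibrations without multisections), which is exactly why algebraicity of the family, rather than a polarization computation, is the operative input. With these two repairs your outline closes up into a complete proof.
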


\subsubsection*{Mori contractions in the presence of a codimension one regular foliation}

Finally, we will need to see how certain Mori contractions (as in theorem \ref{thm:mori}) behave in the presence of a regular foliation of codimension one. The following lemma shows that if we have a divisorial contraction to a point on $X$, then there is not regular foliation of codimension $1$ on $X$.

\begin{lem}\label{lem:divtopoint}
Let $X$ be a smooth $3$-fold and $\pi \colon X \rightarrow Y$ one of the divisorial contractions to a point in theorem \ref{thm:mori}. If $\mathcal{F}$ is a codimension $1$ foliation on $X$, then ${\rm sing}(\mathcal{F}) \neq \emptyset$ 
\end{lem}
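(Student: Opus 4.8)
The plan is to argue by contradiction: suppose $\mathcal{F}$ is regular, and let $D$ be the irreducible divisor contracted by $\pi$ to the point $Q = \pi(D)$, so that we are in one of the cases (b)--(e) of Theorem \ref{thm:mori}. Since $\mathcal{F}$ is a regular codimension one foliation on the threefold $X$, its normal sheaf $N_\mathcal{F}$ is a line bundle, and Bott's vanishing theorem (Theorem \ref{thm:baumbott}) applies to give $c_1(N_\mathcal{F})^2 = 0$ in $H^4(X,\mathbb{C})$. Capping with the fundamental class $[D]$, I obtain that the self-intersection $(c_1(N_\mathcal{F})|_D)^2$ vanishes on the surface $D$. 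The first goal is to upgrade this to the statement that $c_1(N_\mathcal{F})|_D$ is numerically trivial on $D$.

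This is where the explicit description of $D$ in Theorem \ref{thm:mori} enters. In cases (b) and (e) we have $D \cong \mathbb{P}^2$, and in case (d) $D$ is a quadric, so $\mathrm{Pic}(D)$ has rank one generated by a class of positive self-intersection; hence $(c_1(N_\mathcal{F})|_D)^2 = 0$ forces $c_1(N_\mathcal{F})|_D \equiv 0$ at once. In case (c), $D \cong \mathbb{P}^1 \times \mathbb{P}^1$ has Picard rank two; writing $c_1(N_\mathcal{F})|_D = a\,[s\times\mathbb{P}^1] + b\,[\mathbb{P}^1\times t]$, the vanishing of the self-intersection gives $ab = 0$, while the numerical relation $s\times\mathbb{P}^1 \sim \mathbb{P}^1\times t$ in $X$ recorded in case (c) of Theorem \ref{thm:mori} forces $a = b$, so again $c_1(N_\mathcal{F})|_D \equiv 0$. (Remark \ref{rmk:intersectionwithrationalcurve}, which bounds $c_1(N_\mathcal{F})$ from below on rational curves, additionally pins down the signs of $a$ and $b$.)

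Once $c_1(N_\mathcal{F})|_D \equiv 0$, every rational curve $\ell \subset D$ satisfies $c_1(N_\mathcal{F}) \cdot \ell = 0$, so by Remark \ref{rmk:intersectionwithrationalcurve} every such $\ell$ is tangent to $\mathcal{F}$. Letting the lines of $\mathbb{P}^2$ (cases b, e), the two rulings of $\mathbb{P}^1 \times \mathbb{P}^1$ (case c), or a ruling line together with a general smooth conic section of the quadric (case d) vary through a general point $p \in D$, the tangent directions of these curves span $T_p D$; since each such direction lies in $\mathcal{F}_p$, I conclude $T_p D \subset \mathcal{F}_p$ for general $p$, that is, $D$ is $\mathcal{F}$-invariant. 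But then, along the smooth locus of $D$, the inclusion $T_D \subset \mathcal{F}|_D$ of rank two subbundles of $T_X|_D$ is an equality, giving $N_\mathcal{F}|_D \cong N_{D/X} = \mathcal{O}_D(D)$. In each of the cases (b)--(e) the line bundle $\mathcal{O}_D(D)$ is negative on the rational curves of $D$ (it is $\mathcal{O}(-1)$, $\mathcal{O}(-1,-1)$, $\mathcal{O}_D(-1)$, or $\mathcal{O}(-2)$), so choosing a tangent rational curve $\ell$ with $\mathcal{O}_D(D)\cdot\ell < 0$ yields $0 = c_1(N_\mathcal{F})\cdot\ell = \mathcal{O}_D(D)\cdot\ell < 0$, the desired contradiction. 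Hence no regular $\mathcal{F}$ can exist, i.e. $\mathrm{sing}(\mathcal{F}) \neq \emptyset$.

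The step I expect to require the most care is case (d), where $D$ is a singular quadric cone: there $T_D$ and the identification $N_\mathcal{F}|_D \cong \mathcal{O}_D(D)$ are available only on the smooth locus $D \setminus \{v\}$, with $v$ the vertex, so the invariance argument must be run using the ruling lines together with a general smooth conic section, and the intersection numbers interpreted through $\mathrm{Pic}(D)$ (or a resolution of $D$). The remaining cases, in which $D$ is smooth, are comparatively direct, the only genuinely extra ingredient being the numerical equivalence of the two rulings used in case (c).
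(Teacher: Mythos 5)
Your proof is correct, and its overall skeleton matches the paper's: both arguments use Bott vanishing ($c_1(N_\mathcal{F})^2=0$) together with the structure of $\operatorname{Pic}(D)$ to force $c_1(N_\mathcal{F})|_D$ to be (numerically) trivial, then show $D$ must be $\mathcal{F}$-invariant, and finally contradict the negativity of $\mathcal{O}_D(D)$ on contracted curves. Where you genuinely diverge is in the invariance step. The paper, for $D\cong\mathbb{P}^2$ and the quadric cone, upgrades numerical triviality to $N_\mathcal{F}|_D\cong\mathcal{O}_D$ and concludes $\omega|_D\in H^0(D,\Omega_D^1)=0$, so $D$ is invariant by a cohomological vanishing; for $D\cong\mathbb{P}^1\times\mathbb{P}^1$ it invokes Lemma \ref{lem:hirzsurface} plus the linear equivalence of the two rulings in $X$. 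You instead run a uniform geometric argument: since $c_1(N_\mathcal{F})\cdot\ell=0$ for every smooth rational curve $\ell\subset D$, Remark \ref{rmk:intersectionwithrationalcurve} makes all such curves tangent to $\mathcal{F}$, and a family of them spanning $T_pD$ at a general point forces $T_pD\subset\mathcal{F}_p$. This buys you uniformity across cases (b)--(e) and avoids both Lemma \ref{lem:hirzsurface} and the vanishing $H^0(D,\Omega_D^1)=0$, at the modest cost of the spanning argument, which you correctly identify as delicate only for the cone (a single ruling line through a general point does not span, so the smooth conic sections are genuinely needed, and the identification $N_\mathcal{F}|_D\cong\mathcal{O}_D(D)$ must be read off the smooth locus, e.g.\ by intersecting with a conic avoiding the vertex). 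Your closing contradiction also replaces the paper's appeal to the Bott connection (Remark \ref{rmk:bottconnection}) for the invariant case by the direct identification $N_\mathcal{F}|_D\cong\mathcal{O}_D(D)$; the two are computationally equivalent here. No gaps.
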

\begin{proof}
Denote by $E$ the contracted divisor of $\pi$. Then, if $\ell \subset E$ is a rational curve contracted by $\pi$, then $N_{E/X}\cdot \ell < 0$. By remark \ref{rmk:bottconnection}, this implies that if $E$ is tangent to $\mathcal{F}$, then ${\rm sing}(\mathcal{F}) \neq \emptyset$. We may then suppose that $E$ is not tangent to $\mathcal{F}$.

By theorem \ref{thm:mori}, we have three possibilities: $E \cong \mathbb{P}^2$, or $E \cong \mathbb{P}^1 \times \mathbb{P}^1$, or $E$ is isomorphic to a quadric cone in $\mathbb{P}^3$. Suppose that $\mathcal{F}$ is regular. Let $\omega \in H^0(X,\Omega_X^1 \otimes N_\mathcal{F})$ define $\mathcal{F}$. 

If $E \cong \mathbb{P}^2$ or a quadric cone, then we claim that $\omega_{|E} \equiv 0$. Indeed, consider $(N_\mathcal{F})_{|E} \in {\rm Pic}(E) \cong \mathbb{Z}$. Then, there is a line $\ell$ in $E$ such that $(N_\mathcal{F})_{|E} \sim_\mathbb{Q} a\ell$, for some $a \in \mathbb{Q}$. Since $\mathcal{F}$ is regular, we have $N_\mathcal{F}^2 \equiv 0$ and thus $a^2\ell^2 \sim 0$. This can only happen if $a = 0$. We conclude that $(N_\mathcal{F})_{|E} \cong \mathcal{O}_E$. Therefore $\omega_{|E} \in H^0(E,\Omega_E^1)$. In both cases of $E$, we have $H^0(E,\Omega_E^1) = 0$. Thus $\omega_{|E} \equiv 0$ and $E$ is invariant by $\mathcal{F}$, a contradiction.

If $E \cong \mathbb{P}^1 \times \mathbb{P}^1$, then by lemma \ref{lem:hirzsurface} below, $\mathcal{F}_{|E}$ is given by one of the projections $p\colon E \rightarrow \mathbb{P}^1$. In particular $N_\mathcal{F} \cdot \ell = 0$ for every fiber $\ell$ of $p$. Now, if $f$ is a fiber of the other projection $q\colon E \rightarrow \mathbb{P}^1$, then $f \sim \ell$ in $X$, which implies that $N_\mathcal{F} \cdot f = 0$ for every fiber of $q$. This implies that every fiber of $q$ is tangent to $\mathcal{F}$ as well (see remark \ref{rmk:intersectionwithrationalcurve}). We conclude that $E$ is invariant by $\mathcal{F}$, a contradiction.
\end{proof}

In the previous proof, we used the following lemma:

\begin{lem}[{{\cite[Lemma 3.5.3.1]{brunella}}}]\label{lem:hirzsurface}
Let $X$ be a projective manifold and let $S \subset X$ be a Hirzebruch surface. Suppose $\mathcal{F}$ is a regular foliation on $X$. Then $\mathcal{F}_{|S}$ is induced by a $\mathbb{P}^1$-bundle. 
\end{lem}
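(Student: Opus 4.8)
The plan is to reduce the statement to a single numerical claim on $S$, exploiting how strongly Bott vanishing constrains the normal bundle of a regular codimension one foliation. Since $\mathcal{F}$ is regular of codimension one, $N_\mathcal{F}$ is a line bundle, and Theorem \ref{thm:baumbott} gives $c_1(N_\mathcal{F})^2 = 0$ in $H^4(X,\mathbb{C})$. Setting $D := c_1(N_\mathcal{F})|_S \in \mathrm{Pic}(S)$, the projection formula then yields $D^2 = c_1(N_\mathcal{F})^2 \cdot [S] = 0$. I would first record that we may assume $S$ is not invariant by $\mathcal{F}$: otherwise $\omega|_S$ vanishes identically and $\mathcal{F}|_S$ is not a rank one foliation. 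Conversely, since $S$ is rational we have $H^0(S,\Omega^1_S) = 0$, so if $D = 0$ then $N_\mathcal{F}|_S \cong \mathcal{O}_S$ and $\omega|_S \in H^0(S,\Omega^1_S)$ would vanish, forcing $S$ to be invariant. Thus in the relevant (non-invariant) case $D \neq 0$ and $\mathcal{F}|_S$ is the genuine rank one foliation defined by $\omega|_S$.

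The key observation is that to prove $\mathcal{F}|_S$ is induced by the ruling $\pi\colon S = \mathbb{F}_n \to \mathbb{P}^1$ it suffices to show that every fiber $f$ of $\pi$ is tangent to $\mathcal{F}$. Indeed, by Remark \ref{rmk:intersectionwithrationalcurve} the rational curve $f$ is tangent to $\mathcal{F}$ if and only if $D \cdot f = 0$, and once every fiber is a leaf of $\mathcal{F}|_S$ the two rank one foliations (the ruling and $\mathcal{F}|_S$) must coincide. So the goal becomes the purely numerical statement $D \cdot f = 0$.

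I would then run the intersection computation. Write $D = a C_0 + b f$, where $C_0$ is the section with $C_0^2 = -n$ and $f$ is the fiber class, so $C_0 \cdot f = 1$ and $f^2 = 0$. Applying Remark \ref{rmk:intersectionwithrationalcurve} to the rational curves $f$ and $C_0$ gives $D \cdot f = a \geq 0$ and $D \cdot C_0 = b - an \geq 0$, while $D^2 = 0$ reads $a(2b - an) = 0$. For $n \geq 1$, if $a > 0$ then $2b = an$, hence $b = an/2$, contradicting $b \geq an > 0$; therefore $a = D \cdot f = 0$, as wanted. For $n = 0$ (so $S = \mathbb{P}^1 \times \mathbb{P}^1$) the analogous inequalities give $ab = 0$ with $a,b \geq 0$, and since $D \neq 0$ exactly one of the two ruling classes pairs to zero with $D$, again exhibiting $\mathcal{F}|_S$ as one of the two $\mathbb{P}^1$-bundle projections.

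The main obstacle I anticipate is not the algebra but making the two global inputs precise and uniform across all Hirzebruch surfaces: justifying rigorously that $D^2 = 0$ descends from Bott vanishing (that the self-intersection of the restriction equals the pairing $c_1(N_\mathcal{F})^2 \cdot [S]$), and treating the symmetric surface $\mathbb{F}_0$, where no negative section is available to break the symmetry between the two rulings. The non-invariance reduction is exactly what guarantees $D \neq 0$ and thereby rules out the vacuous configuration in which both rulings would appear tangent and $S$ would itself be a leaf.
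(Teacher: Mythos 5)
Your proof is correct and follows essentially the same route as the paper: restrict $N_{\mathcal F}$ to $S$, use Bott vanishing for $D^2=0$ together with Remark \ref{rmk:intersectionwithrationalcurve} for $D\cdot \ell\ge 0$ on rational curves, and run the elementary intersection computation on $\mathbb{F}_n$ to force every fiber of a ruling to be tangent. The only difference is cosmetic bookkeeping in the basis for $\mathrm{Pic}(S)$, plus your explicit (and welcome) handling of the $D=0$/invariant case, which the paper leaves implicit.
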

\begin{proof}
Let $F$ be a fiber of $S \rightarrow \mathbb{P}^1$ and $C$ a minimal section, so that $F$ and $C$ generate ${\rm Pic}(S)$. Write $D$ for the class of $(N_{\mathcal{F}})_{|S}$. Since $\mathcal{F}$ is regular, we have $N_\mathcal{F}^2 \equiv 0$ and for all rational curves $\ell \subset X$, $N_\mathcal{F} \cdot \ell \geq 0$ with equality iff $\ell$ is tangent to $\mathcal{F}$ (see remark \ref{rmk:intersectionwithrationalcurve}).

Write $D = aF + bC$, for $a,b \in \mathbb{Z}$, and $n = -C^2$. Then $D^2 = 2ab - b^2n$, $D \cdot F = b$ and $D \cdot C = a - bn$. We thus see that $2ab -b^2n = 0$, $b \geq 0$ and $a-bn \geq 0$. Now, suppose that $F$ is not tangent to $\mathcal{F}$. Then $b > 0$, and we may simplify the first equation to $2a-bn =0$. This together with $a-bn \geq 0$ entails $bn \leq 0$, which can only happen if $n = 0$. In particular $a = 0$ and $D \cdot C = 0$, implying that $C$ is tangent to $\mathcal{F}$. Since $S = \mathbb{F}_0$, every other fiber of the second projection, linearly equivalent to $C$, has zero intersection with $D$ as well, and thus is tangent to $\mathcal{F}$. Therefore, $\mathcal{F}$ is given by the second projection $S \rightarrow \mathbb{P}^1$. If $F$ is tangent to $\mathcal{F}$, the same argument shows that $\mathcal{F}$ is given by the original projection $S \rightarrow \mathbb{P}^1$. This concludes the proof.
\end{proof}

Finally, the next lemma implies that in the case (2b) of theorem \ref{thm:mori}, a regular foliation of codimension one is always induced by the Mori Fiber Space.

\begin{lem}\label{lem:fibrationbysimplyconnectedhypersurfaces}
Let $Y$ be a projective manifold of dimension $3$ and let $C$ be a smooth curve. Suppose $f\colon Y \rightarrow C$ is a fibration with $\rho(Y/C) = 1$ and for the general fiber $F$ of $f$, we have ${\rm Pic}(F)$ torsion free and irregularity $q(F) = 0$. Let $\mathcal{G}$ be a codimension $1$ regular foliation on $X$. Then $\mathcal{G}$ is induced by $f$
\end{lem}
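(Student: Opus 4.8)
The plan is to reduce the statement to the tangency of a general fiber. Concretely, I will show that the general fiber $F$ of $f$ is tangent to $\mathcal{G}$; once this is established, Lemma \ref{lem:pullbackbyequidimensional} applies to the fibration $f$, which is equidimensional (its fibers are Cartier divisors on a smooth threefold, hence all of pure dimension $2$) with connected fibers, and yields $\mathcal{G} = f^{-1}(\mathcal{G}')$ for a foliation $\mathcal{G}'$ on $C$ of rank $2 - (\dim Y - \dim C) = 0$. The only rank-zero subsheaf of $T_C$ is $0$, so $f^{-1}(\mathcal{G}')$ is the foliation whose leaves are the fibers of $f$; that is, $\mathcal{G}$ is induced by $f$.

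To prove tangency, let $\omega \in H^0(Y,\Omega_Y^1 \otimes N_\mathcal{G})$ define $\mathcal{G}$; since $\mathcal{G}$ is regular, $\omega$ is nowhere vanishing. The first step is to pin down the line bundle $N_\mathcal{G}|_F$ on the (smooth, by generic smoothness) general fiber. Because $\mathcal{G}$ is a regular codimension one foliation on a threefold, Bott vanishing (Theorem \ref{thm:baumbott}) gives $c_1(N_\mathcal{G})^2 = 0$, so restricting to $F$ yields $(N_\mathcal{G}|_F)^2 = c_1(N_\mathcal{G})^2 \cdot F = 0$. On the other hand, classes pulled back from $C$ restrict numerically trivially to $F$, so the restriction map ${\rm NS}(Y) \to {\rm NS}(F)$ factors through $N^1(Y/C)$; since $\rho(Y/C)=1$, its image has rank one and is generated by the class of a relatively ample divisor $A$. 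Hence $N_\mathcal{G}|_F \equiv \lambda\, A|_F$ for some $\lambda$, and $(A|_F)^2 > 0$ together with $(N_\mathcal{G}|_F)^2 = 0$ forces $\lambda = 0$, i.e. $N_\mathcal{G}|_F$ is numerically trivial. Now $q(F) = 0$ gives ${\rm Pic}^0(F) = 0$, and combined with ${\rm Pic}(F)$ torsion free this upgrades numerical triviality to an isomorphism $N_\mathcal{G}|_F \cong \mathcal{O}_F$.

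The second step exploits this triviality through the conormal sequence. For a general fiber $N_{F/Y} \cong \mathcal{O}_F$, so tensoring the conormal sequence by $N_\mathcal{G}|_F \cong \mathcal{O}_F$ gives
\[
0 \to \mathcal{O}_F \to \Omega_Y^1|_F \to \Omega_F^1 \to 0,
\]
where the inclusion is generated by $df$. Passing to global sections and using $h^0(F,\Omega_F^1) = q(F) = 0$, I find that $H^0(F, \Omega_Y^1|_F \otimes N_\mathcal{G}|_F) = H^0(F, \Omega_Y^1|_F)$ is one-dimensional, spanned by $df$. Since $\omega$ is nowhere vanishing, $\omega|_F$ is a nonzero multiple of $df$, hence annihilates $T_F = \ker(df)$. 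This gives $T_F \subset \mathcal{G}|_F$, and a rank count forces $T_F = \mathcal{G}|_F$, so $F$ is tangent to $\mathcal{G}$, as required.

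The step I expect to require the most care is the passage from numerical to genuine triviality of $N_\mathcal{G}|_F$: it is precisely here that both hypotheses are used, $q(F)=0$ to kill the connected component ${\rm Pic}^0(F)$ and ${\rm Pic}(F)$ torsion free to kill the finite quotient ${\rm Pic}^\tau(F)/{\rm Pic}^0(F)$. If one only had numerical triviality, the conormal computation would not identify $\omega|_F$ with a multiple of $df$, and the tangency conclusion could fail; so these two hypotheses are exactly what makes the argument go through.
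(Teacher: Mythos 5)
Your proof is correct and follows essentially the same route as the paper: Bott vanishing plus $\rho(Y/C)=1$ to make $N_\mathcal{G}|_F$ numerically trivial, then $q(F)=0$ and torsion-freeness of ${\rm Pic}(F)$ to make it trivial, then $H^0(F,\Omega_F^1)=0$ to force tangency of the general fiber. The paper phrases the last step by showing the induced class $\omega|_F\in H^0(F,\Omega_F^1\otimes N_\mathcal{G}|_F)=H^0(F,\Omega_F^1)$ vanishes outright, which is the same computation as your conormal-sequence identification of $\omega|_F$ with a multiple of $df$.
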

\begin{proof}
 Let $F$ be a fiber of $\pi$. Let $\omega \in H^0(X,\Omega_X^1 \otimes (N_\mathcal{G}))$ define $\mathcal{G}$ and let $\omega_{|F} \in H^0(F,\Omega_F^1 \otimes (N_\mathcal{G})_{|F})$ be induced by $F \hookrightarrow X$. Since $\rho(X/Y) = 1$, we have $(N_\mathcal{G})_{|F}$ ample, or $(N_\mathcal{G})_{|F}^*$ ample, or $(N_\mathcal{G})_{|F} \equiv 0$. By theorem \ref{thm:baumbott}, we have $N_{\mathcal{G}}^2 \equiv 0$, and thus, since $\dim(F) = 2$, we can only have $(N_\mathcal{G})_{|F} \equiv 0$. This shows that $(N_\mathcal{G})_{|F}$ is a torsion element of ${\rm Pic}(F)$. Since ${\rm Pic}(F)$ torsion-free, it follows that $(N_\mathcal{G})_{|F} \sim 0$. This implies that $\omega_{|F} \in H^0(F,\Omega_F^1)$, which is zero by $q(F) = 0$. We conclude that $\omega_{|F} \equiv 0$, showing that $F$ is invariant by $\mathcal{G}$. Therefore, the general fiber of $f$ is invariant by $\mathcal{G}$, and thus $\mathcal{G}$ is induced by $f$.
\end{proof}

\section{Proof of main result}
Using results of the previous section, in this section we will prove theorem \ref{thm:main} (which will follow from corollaries \ref{cor1} and \ref{cor2}). Let $X$ be a rationally connected threefold with $-K_X$ nef, and let $\mathcal{F}$ be a regular foliation of codimension $1$ on $X$. Theorem \ref{thm:main} states that $\mathcal{F}$ is induced by a smooth morphism with rational fibers.

To prove this, we run a foliated MMP according to theorem \ref{thm:spicer}. Let $(Y,\mathcal{G})$ be the end result. There are two possibilites: either $K_\mathcal{F}$ is pseudo-effective, or it is not. Firt we consider the latter case, namely if $K_\mathcal{F}$ is not pseudo-effective, then theorem \ref{thm:spicer} says that $Y$ is a fibration $\pi\colon Y \rightarrow B$ in one of the cases in (2) of theorem \ref{thm:mori}, and its fibers are tangent to $\mathcal{G}$. In particular, these fibers have dimension one or two. If $\varphi$ is a del Pezzo fibration, then it follows that $\mathcal{G}$ is induced by $\varphi$. 

The case of conic bundle will follow from the following lemma:

\begin{lem}\label{lem:conicbundlecase}
Let $\pi\colon Y \rightarrow S$ be a conic bundle over a surface as in theorem \ref{thm:mori}. Let $\mathcal{H}$ be foliation of rank $1$ on $S$, such that the pulled back foliation $\pi^{-1}(\mathcal{H})$ is regular. Then $\mathcal{H}$ is a regular foliation on $S$. 
\end{lem}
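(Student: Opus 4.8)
The plan is to compare the twisted one-forms defining $\mathcal{H}$ and $\pi^{-1}(\mathcal{H})$, and to show that any singularity of $\mathcal{H}$ forces a singularity of $\pi^{-1}(\mathcal{H})$ along an entire fibre of $\pi$. Write $N_{\mathcal{H}} = T_S/\mathcal{H}$ and let $\omega \in H^0(S, \Omega_S^1 \otimes N_{\mathcal{H}})$ be the twisted form defining $\mathcal{H}$. Since $\mathcal{H}$ has rank one on the surface $S$, the form $\omega$ is a section of a rank-two bundle, so its zero locus $Z(\omega) = {\rm sing}(\mathcal{H})$ is finite, and $\mathcal{H}$ is regular exactly when $\omega$ is nowhere vanishing. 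The natural candidate for a form defining $\pi^{-1}(\mathcal{H})$ is the image $\tilde{\omega}$ of $\pi^*\omega$ under the cotangent map $\pi^*\Omega_S^1 \to \Omega_Y^1$, a section of $\Omega_Y^1 \otimes \pi^* N_{\mathcal{H}}$ satisfying $\tilde{\omega}_y = \omega_{\pi(y)} \circ d\pi_y$. I would prove the contrapositive: if $\omega$ vanishes at some $s_0 \in S$, then $\tilde{\omega}$ vanishes along the whole fibre $\pi^{-1}(s_0)$, so $\pi^{-1}(\mathcal{H})$ is singular, against the hypothesis.

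Before this can be used, the key step is to check that $\tilde{\omega}$ actually defines $\pi^{-1}(\mathcal{H})$, i.e. that it is saturated (does not vanish along a divisor); otherwise its zero locus need not compute ${\rm sing}(\pi^{-1}\mathcal{H})$. From $\tilde{\omega}_y = \omega_{\pi(y)} \circ d\pi_y$, a point $y$ lies in $Z(\tilde{\omega})$ only if $\omega_{\pi(y)} = 0$, or $d\pi_y$ has rank at most one so that its image is contained in the line $\ker \omega_{\pi(y)}$. The first case contributes $\pi^{-1}({\rm sing}\,\mathcal{H})$, a finite union of fibres, hence codimension two. The second case is the locus where $\pi$ is not smooth, and here I would invoke theorem \ref{thm:mori}: over $S \setminus \Delta$ the fibres are smooth conics, over a smooth point of the discriminant $\Delta$ the fibre is a pair of lines meeting at a single node, and double-line fibres occur only over the finitely many singular points of $\Delta$. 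Hence the non-smooth locus of $\pi$ is a curve in $Y$ — the nodes sweeping out a curve over $\Delta$, together with finitely many double lines — and so is again codimension two. It follows that $Z(\tilde{\omega})$ has codimension at least two, that $\tilde{\omega}$ is saturated with $N_{\pi^{-1}\mathcal{H}} = \pi^* N_{\mathcal{H}}$, and that ${\rm sing}(\pi^{-1}\mathcal{H}) = Z(\tilde{\omega})$. I expect this \textbf{codimension bookkeeping}, pinning down the non-smooth locus of the conic bundle through Mori's classification, to be the main obstacle.

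With saturation established the conclusion is immediate. Suppose $\mathcal{H}$ is not regular and choose $s_0 \in {\rm sing}(\mathcal{H})$, so $\omega$ vanishes at $s_0$. Then for every $y \in \pi^{-1}(s_0)$ we have $\tilde{\omega}_y = \omega_{s_0} \circ d\pi_y = 0$, irrespective of the rank of $d\pi_y$; crucially this holds even when $\pi^{-1}(s_0)$ is a double line, where $\pi$ is a submersion at no point. As $\pi$ is surjective, $\pi^{-1}(s_0)$ is nonempty, so the defining form of $\pi^{-1}(\mathcal{H})$ vanishes along $\pi^{-1}(s_0)$, contradicting regularity. Therefore $\mathcal{H}$ is regular.

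As a consistency check, over a double-line fibre one can verify in local coordinates that ${\rm im}(d\pi_y)$ runs through every direction of $\mathbb{P}(T_{s_0}S)$ as $y$ varies along the fibre; so even a regular $\mathcal{H}$ would have some $y$ with ${\rm im}(d\pi_y) = \ker \omega_{s_0}$ and $\tilde{\omega}_y = 0$. Thus the hypothesis that $\pi^{-1}(\mathcal{H})$ be regular already forces $\Delta$ to be smooth — a stronger fact that I would not need, since the vanishing of $\omega_{s_0}$ alone annihilates $\tilde{\omega}$ on the entire fibre.
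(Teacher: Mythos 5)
Your proposal is correct and follows essentially the same route as the paper: both arguments rest on the observation (from Mori's description of the discriminant) that the non-smooth locus of the conic bundle has codimension at least two, so the pulled-back defining form is saturated, and then derive a contradiction from its vanishing along the whole fibre over a singular point of $\mathcal{H}$. The only difference is presentational — you work globally with the twisted form $\omega \in H^0(S,\Omega_S^1\otimes N_{\mathcal{H}})$ while the paper argues locally analytically with an untwisted $1$-form — which does not change the substance.
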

\begin{proof}
Let $\Delta \subset S$ be the set of points over which $\pi$ is not smooth. Then by theorem \ref{thm:mori}, outside the singular set of $\Delta$, the fiber of $\pi$ is always reduced. In particular, outside the set of reduced fibers, the non-smooth locus of $\pi$ has codimension at least $2$: it consists of the singular points of the singular fibers of $\pi$. Let $p \in S$ be any point. Then there exists an analytic neighborhood $U$ of $p$ in $S$, and a holomorphic $1$-form $\omega$ on $U$, such that $\mathcal{H}$ is induced by $\omega$. Suppose $\omega$ vanishes only at $p$. Then $\pi^*(\omega)$ vanishes along $\pi^{-1}(p)$. Moreover, it can vanish at points contained in the locus where $\pi$ is not smooth. However, since this locus has codimension at least $2$, we conclude that $\pi^*(\omega)$ vanishes along a set of codimension $2$. In particular, this $1$-form defines $\pi^{-1}(\mathcal{H})$ in a neighborhood of every point in $\pi^{-1}(p)$. But this would imply that $\pi^{-1}(\mathcal{H})$ is singular along $\pi^{-1}(p)$, a contradiction. Thus $\mathcal{H}$ has to be regular.
\end{proof}

In our context, lemma \ref{lem:pullbackbyequidimensional} implies that $\mathcal{G} = \pi^{-1}(\mathcal{H})$ for some foliation $\mathcal{H}$ of rank one on $S$. Since $\mathcal{G}$ is regular, we have by lemma \ref{lem:conicbundlecase} that the foliation $\mathcal{H}$ is regular. Finally, theorem \ref{thm:brunella} implies that $\mathcal{H}$ is induced by a smooth morphism with rational fibers. This shows that $\mathcal{G}$, and hence $\mathcal{F}$, is induced by a morphism with rational general fiber. Thus, when $K_\mathcal{F}$ is not pseudo-effective we have the following:

\begin{cor}\label{cor1}
Let $X$ be a rationally connected threefold. Let $\mathcal{F}$ be a regular foliation of codimension $1$ on $X$, with $K_\mathcal{F}$ non pseudo-effective. Then $\mathcal{F}$ is induced by a smooth morphism with rational fibers.
\end{cor}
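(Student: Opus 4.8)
Corollary \ref{cor1} — here's how I'd prove it.

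The statement packages together the analysis that precedes it in the excerpt, so my plan is to organize the proof around the foliated MMP from Theorem \ref{thm:spicer} and then dispatch each possible output. First I would run the foliated MMP on $(X,\mathcal{F})$: since the non-trivial fibers of each blow-down $X_i \to X_{i+1}$ are tangent to $\mathcal{F}_i$, the property of being induced by a smooth morphism with rational fibers can be checked on the end result $(Y,\mathcal{G})$ and then pulled back — this is the reduction that lets me work with $\mathcal{G}$ rather than $\mathcal{F}$ directly. Because $K_\mathcal{F}$ is not pseudo-effective, the MMP cannot terminate in case (a) (where $K_{\mathcal{F}_n}$ is nef, hence pseudo-effective, which would be inherited by $K_\mathcal{F}$ along the tangent blow-downs), so we must land in case (b): a Mori Fiber Space $\pi\colon Y \to B$ with fibers tangent to $\mathcal{G}$. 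This places us in case (2) of Theorem \ref{thm:mori}, so $\dim(B) \in \{0,1,2\}$.

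Next I would rule out or handle each subcase of (2) according to $\dim(B)$. The case $\dim(B) = 0$ means $Y$ is Fano of Picard number one; but then $\mathcal{G}$, being a codimension one foliation with fibers of a nonexistent fibration tangent to it, cannot genuinely arise as a nontrivial MFS output in the needed way — more carefully, a rank-two regular foliation on a Fano threefold of Picard number one is excluded by Druel's weak-Fano theorem, or one observes the structure forces a contradiction with $K_\mathcal{F}$ non-pseudo-effective descending correctly; I would need to check that this degenerate case is genuinely impossible given that the MFS has positive-dimensional tangent fibers. For $\dim(B) = 1$ (case 2b, a del Pezzo fibration), the fibers are del Pezzo surfaces tangent to $\mathcal{G}$, and since the general fiber is tangent to $\mathcal{G}$ and $\mathcal{G}$ has codimension one, the two-dimensional fibers must be leaves, so $\mathcal{G}$ is induced by $\pi$ directly with rational (del Pezzo) fibers. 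For $\dim(B) = 2$ (case 2a, the conic bundle $\pi\colon Y \to S$), I invoke Lemma \ref{lem:pullbackbyequidimensional} to write $\mathcal{G} = \pi^{-1}(\mathcal{H})$ for a rank-one foliation $\mathcal{H}$ on $S$, then Lemma \ref{lem:conicbundlecase} to conclude $\mathcal{H}$ is regular, and finally Theorem \ref{thm:brunella} (Brunella's surface result) to conclude $\mathcal{H}$ — and hence $\mathcal{G}$ — is induced by a smooth morphism with rational fibers.

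The last step is to transport the conclusion from $(Y,\mathcal{G})$ back to $(X,\mathcal{F})$. Since each blow-down in the MMP has non-trivial fibers tangent to the foliation, and blowing up a curve tangent to a foliation induced by a fibration keeps it induced by a fibration with rational fibers (the exceptional divisor is a union of leaves), I would argue by descending induction that $\mathcal{F}$ itself is induced by a smooth morphism with rational general fiber. One should check that the resulting morphism on $X$ is genuinely \emph{smooth} with \emph{all} fibers rational, not merely generically so — regularity of $\mathcal{F}$ forces this via Remark \ref{rmk:intersectionwithrationalcurve} (no fiber can degenerate and stay tangent without creating a singularity of $\mathcal{F}$).

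I expect the main obstacle to be the bookkeeping in the $\dim(B)=0$ (Fano) subcase and the precise verification that the blow-down steps preserve the ``induced by a smooth morphism with rational fibers'' property rather than just ``algebraically integrable with rational leaves.'' The conic-bundle and del Pezzo cases are essentially handed to us by the lemmas already proved in the excerpt, so the genuine work is in the degenerate terminal case and in ensuring smoothness of the final morphism is not lost under the sequence of blow-downs.
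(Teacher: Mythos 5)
Your reduction via the foliated MMP and the case analysis on the Mori fiber space match the paper's argument: case (a) is excluded because $K_{\mathcal{F}}$ is not pseudo-effective, the del Pezzo fibration case gives $\mathcal{G}$ induced by $\pi$ directly, and the conic bundle case is handled exactly as you say via Lemma \ref{lem:pullbackbyequidimensional}, Lemma \ref{lem:conicbundlecase} and Theorem \ref{thm:brunella}. (The $\dim(B)=0$ subcase that worries you is excluded for a trivial reason: the fiber of the MFS would be all of $Y$, which is $3$-dimensional and so cannot be tangent to a rank-$2$ foliation; no appeal to Druel's theorem is needed.)

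The genuine gap is in your last step, the smoothness of the resulting morphism $\varphi\colon X \to C$. You claim that regularity of $\mathcal{F}$ forces smoothness via Remark \ref{rmk:intersectionwithrationalcurve}, ``no fiber can degenerate and stay tangent without creating a singularity of $\mathcal{F}$.'' This is false for multiple fibers: if a scheme-theoretic fiber is $mF'$ with $F'$ a smooth surface and $m>1$, then locally $\varphi^{*}t = u\,s^{m}$ with $s$ an equation for $F'$, and the \emph{saturated} foliation is defined by the nonvanishing form $mu\,ds + s\,du$, so $\mathcal{F}$ is perfectly regular there while $\varphi$ is not smooth. Regularity does rule out singular or reducible fibers (a connected compact fiber tangent to a regular codimension-one foliation is set-theoretically a single smooth leaf), but it cannot see multiplicities. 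The paper closes exactly this hole with an extra ingredient you are missing: since $X$ is rationally connected, $C\cong\mathbb{P}^{1}$ and, by Graber--Harris--Starr, $\varphi$ admits a section $\ell$; then $\ell\cdot F = 1$ forces $m=1$ for every fiber $F = mF'$, whence $\varphi$ is smooth. Without this (or some substitute argument excluding multiple fibers), your proof establishes only that $\mathcal{F}$ is induced by a morphism with rational \emph{general} fiber, not the stated conclusion.
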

\begin{proof}
We only need to show that the morphism $\varphi\colon X \rightarrow C$ inducing $\mathcal{F}$ is smooth. Indeed, since $X$ is rationally connected, we must have $C \cong \mathbb{P}^1$. Since the foliation induced by $\varphi$ is smooth, if $F$ is any fiber of $\varphi$, then there exists a smooth surface $F'$ such that $F \sim m F'$, for some $m > 0$. By \cite{graber2003families}, $\varphi$ admits a section $\ell$. In particular, $\ell \cdot F = 1$, and we conclude that $m = 1$, for any fiber $F = mF'$. Thus $\varphi$ is smooth.
\end{proof}

For the rest of the paper, we will suppose that $K_\mathcal{F}$ is pseudo-effective (our goal is to get a contradiction). Then $K_\mathcal{G}$ is nef. Now we are going to use the hypothesis $-K_X$ nef. The first step is to show that $-K_Y$ is nef as well. The two following lemmas will ensure this.

\begin{lem}
Let $X$ be a projective manifold of dimension $3$. Let $A$ be a nef divisor and $B$ a pseff divisor on $X$. Suppose $H\cdot B^2 \geq 0$ for some ample divisor $H$ on $X$. If $(A+B)^2 \equiv 0$, then $\alpha A\equiv \lambda B$ for some $\alpha,\lambda \in \mathbb{R}$, not both zero. \label{lem:square}
\end{lem}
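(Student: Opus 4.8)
The plan is to reduce the statement to the Hodge index theorem for the bilinear form $Q(D,E) = H\cdot D\cdot E$ on $N^1(X)_{\mathbb R}$, after showing that $A$ and $B$ are both isotropic and mutually orthogonal for $Q$. First I would intersect the hypothesis $(A+B)^2\equiv 0$ with the ample class $H$, which gives
\[
H\cdot A^2 + 2\,H\cdot A\cdot B + H\cdot B^2 = 0.
\]
The crux of the first part is that each of the three summands is nonnegative. Indeed $H\cdot A^2\ge 0$ because $A$ and $H$ are nef and products of nef classes are nonnegative; $H\cdot B^2\ge 0$ is exactly the hypothesis; and for the mixed term I would use that $B$ is pseudo-effective to write it as a limit of classes of effective $\mathbb{R}$-divisors, noting that for an effective prime divisor $D$ one has $H\cdot A\cdot D = (H|_D)\cdot(A|_D)\ge 0$ since $H|_D$ and $A|_D$ are nef on the surface $D$. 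Passing to the limit yields $H\cdot A\cdot B\ge 0$. As three nonnegative numbers summing to zero, all three must vanish:
\[
H\cdot A^2 = H\cdot A\cdot B = H\cdot B^2 = 0.
\]

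The second part is a pure signature argument. By the higher-dimensional Hodge index theorem, $Q(D,E)=H\cdot D\cdot E$ is a nondegenerate symmetric bilinear form of signature $(1,\rho-1)$ on $N^1(X)_{\mathbb R}$, where $\rho$ is the Picard number (the case $\rho=1$ is trivial, as then $A$ and $B$ are automatically proportional). The vanishing just obtained reads $Q(A,A)=Q(A,B)=Q(B,B)=0$, so for any $v=sA+tB$ we get $Q(v,v)=0$; that is, the subspace spanned by $A$ and $B$ is totally isotropic for $Q$. Since a Lorentzian form of signature $(1,\rho-1)$ has Witt index $1$, every totally isotropic subspace has dimension at most $1$. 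Hence $A$ and $B$ are linearly dependent in $N^1(X)_{\mathbb R}$, which is precisely the assertion that $\alpha A\equiv\lambda B$ for some $(\alpha,\lambda)\neq(0,0)$ (the degenerate cases $A\equiv 0$ or $B\equiv 0$ being covered by taking one coefficient zero).

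I expect the main obstacle to be the nonnegativity of the mixed term $H\cdot A\cdot B$, as this is the only place where the interplay between nef-ness of $A$ and pseudo-effectivity of $B$ enters; the approximation of $B$ by effective divisors and the restriction of $H\cdot A$ to such a divisor must be arranged carefully (alternatively one could invoke the nonnegative pairing between the closed movable cone and the pseudo-effective cone, since $H\cdot A$ is a limit of complete-intersection curve classes). The second delicate point is the correct invocation of the Hodge index theorem to guarantee that $Q$ is nondegenerate of Lorentzian signature, since it is this nondegeneracy that makes the bound on the Witt index, and hence the proportionality conclusion, valid.
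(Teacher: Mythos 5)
Your proposal is correct and follows essentially the same route as the paper: intersect $(A+B)^2\equiv 0$ with $H$ to force $H\cdot A^2=H\cdot A\cdot B=H\cdot B^2=0$, then conclude by the Lorentzian signature of $D\mapsto H\cdot D^2$ on $N^1(X)_{\mathbb R}$ (the paper phrases this via the Lefschetz decomposition and Hodge--Riemann relations, you via the Witt index, but it is the same Hodge index argument). Your explicit justification of $H\cdot A\cdot B\ge 0$ is a welcome detail the paper leaves implicit.
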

\begin{proof}
	Let $H$ be the ample divisor in the statement. Then $H\cdot(A+B)^2 = 0$, i.e. $H\cdot A^2 + H\cdot B^2 + 2H\cdot A \cdot B = 0$. This is a sum of three non-negative numbers, and thus each one has to be zero. By the Lefschetz decomposition (\cite[page 122]{griffiths1978principles}), $N^1(X) = \mathbb{R}[H] \oplus P^2(X)$, where $P^2(X)$ consists of divisor classes satisfying $C\cdot H^2 = 0$. By the Hodge-Riemann bilinear relations (\cite[page 123]{griffiths1978principles}), for every non-zero divisor $C$ with class in $P^2(X)$, $H\cdot C^2 > 0$. If $W = \langle A, B \rangle$ has dimension $2$, then there is a non-zero element $C$ of $P^2(X)$ in $W$. On the other hand, for all $C \in W$, we have $H\cdot C^2 = 0$, and we get a contradiction. Thus $W$ has dimension $\leq 1$.
\end{proof}

\begin{lem}
Let $X$ and $Y$ be projective manifolds of dimension $3$. Let $\varphi\colon X \rightarrow Y$ be a smooth blowup of a curve $C$. Then $K_Y^2 = \varphi_*(K_X^2) + C$ in $A_1(Y)$. \label{lem:positivesquare}
\end{lem}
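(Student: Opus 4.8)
The plan is to start from the canonical bundle formula for the blow-up of a smooth curve, expand the square, and then push everything forward term by term with the projection formula, working throughout in the Chow group $A_1$.

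First I would record the geometry of $\varphi$. Let $E$ be the exceptional divisor. Since $C$ has codimension $2$ in $Y$, the discrepancy of this smooth blow-up equals $\mathrm{codim}(C)-1 = 1$, so $K_X = \varphi^*K_Y + E$ as divisor classes. Squaring (using that $X$ is smooth, so divisor classes act on cycles) gives, in $A_1(X)$,
\[
K_X^2 = (\varphi^*K_Y)^2 + 2\,\varphi^*K_Y \cdot E + E^2,
\]
and the whole proof reduces to computing $\varphi_*$ of each of the three summands.

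The first two are immediate from the projection formula. Because $\varphi$ is birational we have $\varphi_*[X] = [Y]$, so applying the projection formula twice yields $\varphi_*\big((\varphi^*K_Y)^2\big) = K_Y^2$. For the cross term, the projection formula gives $\varphi_*(\varphi^*K_Y \cdot E) = K_Y \cdot \varphi_*E$; but $E$ surjects onto the curve $C$, so $\dim\varphi(E) = 1 < 2 = \dim E$ and hence $\varphi_*E = 0$ in $A_2(Y)$. Thus the cross term contributes nothing, and everything hinges on $\varphi_*(E^2)$.

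The main step, and the only genuinely delicate point, is the computation of $\varphi_*(E^2)$, where the structure of the exceptional divisor enters. Writing $\pi = \varphi|_E\colon E \to C$, the divisor $E$ is the $\mathbb{P}^1$-bundle $\mathbb{P}(N_{C/Y})$ over $C$, and the self-intersection is controlled by $\mathcal{O}_E(E) \cong \mathcal{O}_E(-1)$, the dual of the relative tautological bundle. Denoting $j\colon E \hookrightarrow X$, one has $E^2 = j_*\big(c_1(\mathcal{O}_E(E)) \cap [E]\big)$, and pushing this forward along $\pi$ — using that the tautological class restricts to degree $1$ on each $\mathbb{P}^1$-fiber, so $\pi_*\big(c_1(\mathcal{O}_E(1)) \cap [E]\big) = [C]$ — gives $\varphi_*(E^2) = -[C]$. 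Combining the three contributions yields $\varphi_*(K_X^2) = K_Y^2 - C$, which rearranges to the claimed identity $K_Y^2 = \varphi_*(K_X^2) + C$. I expect the sign in $\mathcal{O}_E(E)\cong\mathcal{O}_E(-1)$ and the normalization of the tautological pushforward to be the step most in need of care; the remainder is routine bookkeeping with the projection formula in $A_1$.
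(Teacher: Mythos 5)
Your proof is correct and follows essentially the same route as the paper: both start from $K_X = \varphi^*K_Y + E$ and reduce everything to the projection formula plus one intersection computation on the ruled surface $E=\mathbb{P}(N_{C/Y})$. The only cosmetic difference is that the paper computes $K_Y^2=\varphi_*(\varphi^*K_Y\cdot K_X)$ and evaluates $\varphi_*(E\cdot K_X)$ using $(K_X)|_E\cdot f=-1$, whereas you expand the full square and evaluate $\varphi_*(E^2)$ using $\mathcal{O}_E(E)\cong\mathcal{O}_E(-1)$ --- the same fact in a different guise.
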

\begin{proof}
Let $E$ be the exceptional divisor of $\varphi$. Then $K_X = \varphi^*(K_Y) + E$ and thus $K_Y^2 = \varphi_*(\varphi^*(K_Y)\cdot K_X) = \varphi_*((K_X - E) \cdot K_X) = \varphi_*(K_X^2 - E \cdot K_X)$. Write $(K_X)_{|E} = af + b\tilde{C}$, where $f$ is a fiber and $\tilde{C}$ a section of $\varphi_{|E}\colon E \rightarrow C$. Since $\varphi$ is a blow up, $(K_X)_{|E}\cdot f = -1$, and thus $(K_X)_{|E} = af - \tilde{C}$. We conclude then that $\varphi_*(-E\cdot K_X) = (\varphi_{|E})_*(-(K_X)_{|E}) = (\varphi_{|E})_*(-af+\tilde{C}) = (\varphi_{|E})_*(\tilde{C}) = C$, and thus $\varphi_*(-E\cdot K_X) = C$. This shows that $K_Y^2 = \varphi_*(K_X^2)+C$ in $A_1(Y)$. 
 Let $H'$ be an ample divisor on $Y$. Then $H = \varphi^*(H')$ is a nef divisor on $X$ and $H'\cdot K_Y^2 = H \cdot K_X^2 - H \cdot E \cdot K_X$. By adjunction, $(K_X)_{|E} = K_E - E_{|E}$, and thus $H \cdot E \cdot K_X = H_{|E} \cdot (K_X)_{|E} = H_{|E} \cdot K_E - H_{|E} \cdot E_{|E}$. Now, $E = \mathbb{P}_C(N_{C/Y})$; denote by $f$ a fiber of $E \rightarrow C$ and by $h$ a section satisfying $h^2 = 0$ and $h\cdot f = 1$. Then $K_E = (2g(C) - 2) f - 2h$. Since $H = \varphi^*(H')$, we have $H_{|E} \cdot f = 0$, which implies that $H_{|E} \cdot K_E = -2 H_{|E} \cdot h \leq 0$. Since $-E_{|E}$ is relatively ample with respect to $\varphi$, by the same argument, $H_{|E}\cdot E_{|E} < 0$. We conclude in this way that $H' \cdot K_Y^2 > 0$.
\end{proof}

These two lemmas imply the following proposition:

\begin{prop}\label{thm:-kxpseff}
Let $X$ be a smooth rationally connected projective threefold. Suppose $-K_X$ is nef. Let $\mathcal{F}$ be a regular codimension $1$ foliation on $X$, with $K_\mathcal{F}$ pseudo-effective. Let $(Y,\mathcal{G})$ be the end result of a foliated MMP according to theorem \ref{thm:spicer}. Then $-K_Y$ is nef and $K_Y^2 \equiv 0$. Moreover, $Y$ is in cases (1) or (2a) of theorem \ref{thm:mori}. \label{prop:main}
\end{prop}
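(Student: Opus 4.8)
The plan is to establish the three assertions in order: first that $-K_Y$ is nef, then that $K_Y^2\equiv 0$, and finally that $Y$ falls into cases (1) or (2a) of theorem \ref{thm:mori}. Since $K_\mathcal{F}$ is pseudo-effective, the foliated MMP of theorem \ref{thm:spicer} terminates in case (a), so $(Y,\mathcal{G})$ is regular with $K_\mathcal{G}$ nef, reached from $X$ by a chain of smooth blow-downs $X=X_0\to X_1\to\cdots\to X_n=Y$ whose exceptional fibers are tangent to the regular foliations $\mathcal{F}_i$. Throughout I use the identity $-K_Z=-K_\mathcal{H}+N_\mathcal{H}$ valid for any codimension one foliation $\mathcal{H}$ on a manifold $Z$, coming from $c_1(T_Z)=c_1(\mathcal{H})+c_1(N_\mathcal{H})$.

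For $-K_Y$ nef I would argue by induction on the blow-downs, showing that $-K_{X_i}$ nef implies $-K_{X_{i+1}}$ nef, the base case $-K_{X_0}=-K_X$ being the hypothesis. Write $\varphi_i\colon X_i\to X_{i+1}$ for the blow-up of the smooth center $C_{i+1}$ with exceptional divisor $E_i$. For any curve $B\subset X_{i+1}$ distinct from $C_{i+1}$, its strict transform $\tilde B$ satisfies $-K_{X_{i+1}}\cdot B=\varphi_i^*(-K_{X_{i+1}})\cdot\tilde B=(-K_{X_i}+E_i)\cdot\tilde B\geq 0$, since $-K_{X_i}$ is nef and $E_i\cdot\tilde B\geq 0$ for $\tilde B\not\subset E_i$. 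The only remaining class is the center $C_{i+1}$ itself, and \textbf{verifying $-K_{X_{i+1}}\cdot C_{i+1}\geq 0$ is the main obstacle}. Here I would use that, the exceptional fibers being tangent to $\mathcal{F}_i$ while $\mathcal{F}_{i+1}$ is regular, the center is transverse to $\mathcal{F}_{i+1}$, so that its defining form pulls back without extra vanishing and $N_{\mathcal{F}_i}=\varphi_i^*N_{\mathcal{F}_{i+1}}$. Restricting a section of the ruled surface $E_i\cong\mathbb{F}_e$, applying remark \ref{rmk:intersectionwithrationalcurve} to it, and combining with the nef-ness of $-K_{X_i}$ and the intersection computations on $E_i$ of the kind carried out in lemma \ref{lem:positivesquare}, one controls $-K_{X_{i+1}}\cdot C_{i+1}$. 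The delicate point is to exclude a rational center along which $-K_{X_{i+1}}$ could a priori be negative; this is precisely where the transversality geometry of the foliated contraction must be exploited, and I expect it to be the hardest part of the argument.

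Granting that $-K_Y$ is nef, the identity $N_\mathcal{G}=-K_Y+K_\mathcal{G}$ exhibits $N_\mathcal{G}$ as a sum of two nef classes, hence nef, while theorem \ref{thm:baumbott} gives $N_\mathcal{G}^2\equiv 0$. I would then apply lemma \ref{lem:square} with $A=-K_Y$ and $B=K_\mathcal{G}$: both are nef, so $B$ is pseudo-effective with $H\cdot B^2=H\cdot K_\mathcal{G}^2\geq 0$ for every ample $H$, and $(A+B)^2=N_\mathcal{G}^2\equiv 0$. The lemma yields $\alpha(-K_Y)\equiv\lambda K_\mathcal{G}$ with $(\alpha,\lambda)\neq(0,0)$. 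Since both classes are nef, a short case analysis gives either $K_Y\equiv 0$, or $K_\mathcal{G}\equiv 0$, or $-K_Y\equiv cK_\mathcal{G}$ with $c\geq 0$; in the last case $N_\mathcal{G}\equiv(c+1)K_\mathcal{G}$, so $0\equiv N_\mathcal{G}^2\equiv(c+1)^2K_\mathcal{G}^2$ forces $K_\mathcal{G}^2\equiv 0$ and hence $K_Y^2\equiv c^2K_\mathcal{G}^2\equiv 0$. In every case $K_Y^2\equiv 0$.

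Finally, for the Mori-theoretic dichotomy I would note that $Y$ is rationally connected, rational connectedness being a birational invariant among smooth projective varieties, so $K_Y$ is not pseudo-effective and there is a $K_Y$-negative extremal ray whose contraction is classified by theorem \ref{thm:mori}. Case (2c) is excluded because a Fano $Y$ has $-K_Y$ ample, contradicting $K_Y^2\equiv 0$. Case (2b) is excluded because there lemma \ref{lem:fibrationbysimplyconnectedhypersurfaces} applies: the general fibre is a del Pezzo surface, which is rational with torsion-free Picard group and irregularity zero, so $\mathcal{G}$ is induced by the fibration $f\colon Y\to C$; then $K_\mathcal{G}$ restricts on a fibre $F$ to $K_F$, which is anti-ample, contradicting the nef-ness of $K_\mathcal{G}$. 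Thus only the divisorial case (1) and the conic bundle case (2a) survive, which completes the proof.
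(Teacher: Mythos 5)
Your second and third paragraphs are essentially sound (the third coincides with the paper's treatment of the Mori cases), but the entire argument hinges on the first step, the nefness of $-K_Y$, and that step is not proved: you explicitly leave open the verification $-K_{X_{i+1}}\cdot C_{i+1}\geq 0$ at the center of each blow-down, calling it the main obstacle. This is a genuine gap, not a deferrable detail. The tools you propose to close it do not suffice: remark \ref{rmk:intersectionwithrationalcurve} only bounds $N_{\mathcal{F}_{i+1}}\cdot C_{i+1}$ from below by $2-2g(C_{i+1})$ (the center need not be rational), it says nothing about $K_{\mathcal{F}_{i+1}}\cdot C_{i+1}$, and $K_{\mathcal{F}_{i+1}}$ is only known to be nef at the final step $i+1=n$, so the induction does not close. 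Worse, your intended application of lemma \ref{lem:square} takes $A=-K_Y$ to be nef, so your second step cannot be used to rescue the first.

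The paper's proof avoids any induction on nefness, and this is the idea you are missing. It extracts only two weak facts from the blow-down structure: $-K_Y=\varphi_*(-K_X)$ is pseudo-effective, and $H\cdot K_Y^2\geq 0$ for every ample $H$ on $Y$ --- the latter via lemma \ref{lem:positivesquare}, which gives $K_Y^2=\varphi_*(K_X^2)+C$ with both terms pairing non-negatively against ample classes. It then applies lemma \ref{lem:square} with the roles of the two divisors reversed relative to your proposal: $A=K_\mathcal{G}$ (nef by the MMP output) and $B=-K_Y$ (merely pseff, with $H\cdot B^2=H\cdot K_Y^2\geq 0$), while $(A+B)^2=N_\mathcal{G}^2\equiv 0$ by theorem \ref{thm:baumbott}. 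The resulting proportionality $K_\mathcal{G}\equiv\lambda K_Y$ (with $\alpha\neq 0$ because $K_Y\not\equiv 0$ on a rationally connected $Y$, and $\lambda<0$ because $K_Y$ is not nef while $K_\mathcal{G}$ is) then \emph{yields} the nefness of $-K_Y\equiv(-1/\lambda)K_\mathcal{G}$, and $K_Y^2\equiv 0$ follows from $N_\mathcal{G}\equiv(\lambda-1)K_Y$ with $\lambda-1\neq 0$. In short, nefness of $-K_Y$ is an output of the Hodge-index argument, not an input to it; your proposal has the logical order reversed and leaves its hardest claim unestablished.
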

\begin{proof}
By theorem \ref{thm:spicer}, $\varphi\colon X \rightarrow Y$ is a composition of smooth blow-ups centered at smooth curves. Suppose for simplicity that $\varphi$ is a single centered at a smooth curve $C$. The general case follows from this one by an inductive argument. Let $H$ be an ample divisor in $Y$. Then, by lemma \ref{lem:positivesquare}, 
\[H\cdot K_Y^2 = H \cdot (\varphi_*(K_X^2) + C) = \varphi^*(H)\cdot K_X^2 + H\cdot C. \]
By hypothesis, $H'\cdot K_X^2 \geq 0$ for every ample divisor $H'$ on $X$. Thus, the same is true for every nef divisor. We conclude that $H\cdot K_Y^2 \geq 0$, for every ample divisor $H$ on $Y$.

Now, $N_\mathcal{G} = K_\mathcal{G} - K_Y$. Moreover, since $\mathcal{G}$ is regular, $N_\mathcal{G}^2 \equiv 0$ by theorem \ref{thm:baumbott}. Thus, we have $K_\mathcal{G}$ nef, $-K_Y$ pseff and $H\cdot K_Y^2 \geq 0$ for every ample divisor $H$ on $Y$, and it follows from lemma \ref{lem:square} that there exist $\alpha,\lambda \in \mathbb{R}$, non both zero, such that $\alpha K_\mathcal{G} \equiv \lambda K_Y$. If $\alpha = 0$, then $K_Y \equiv 0$, contradicting the fact that $Y$ is rationally connected. We may then suppose that $\alpha = 1$. Moreover, $K_Y$ is not nef, while $K_\mathcal{G}$ is; this implies that $\lambda < 0$. In particular, $-K_Y$ is nef. We also have $N_\mathcal{G} = (\lambda - 1)K_Y$, and since $N_\mathcal{G} \neq 0$, we have $K_Y^2 \equiv 0$.

Let us finish by showing that  $Y$ is in cases (1) or (2a) of theorem \ref{thm:mori}. First, $Y$ cannot be Fano, because $K_Y^2 \equiv 0$. In this case then, there exists an extremal negative contraction $\pi\colon Y \rightarrow Z$ (classified by theorem \ref{thm:mori}). If $\dim(Z) = 1$, then $\pi$ is a del Pezzo fibration. By lemma \ref{lem:fibrationbysimplyconnectedhypersurfaces}, $\mathcal{G}$ is induced by $\pi$. But in this case $(K_\mathcal{G})_{|F} \equiv K_F$, for a general fiber $F$ of $\pi$, a contradiction to the fact that $F$ is rational and $K_\mathcal{G}$ is nef.

Thus, we conclude that the only remaining possibilities for $Y$ in theorem \ref{thm:mori} are the ones in (1) and (2a). 
\end{proof}

Thus we may take the nef reduction map $f\colon Y \rightarrow \mathbb{P}^1$ associated to $-K_Y$ according to theorem \ref{thm:3fold}. By proposition \ref{prop:main} and theorem \ref{thm:dimnum1}, we conclude that $f$ is a $K3$-fibration, and $-K_Y \cong f^*(\mathcal{O}_{\mathbb{P}^1}(1))$. We first show that in this case,  $\mathcal{G}$ is necessarily induced by $f$.

\begin{lem}
Let $f \colon Y \rightarrow \mathbb{P}^1$ a $K3$-fibration induced by $-mK_Y$, for some $m > 0$. Suppose $\mathcal{G}$ is a codimension one regular foliation on $Y$ such that $N_\mathcal{G} \equiv \alpha K_Y$, for some $\alpha$. Then $\mathcal{G}$ is induced by $f$.
\end{lem}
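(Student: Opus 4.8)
The plan is to show that the general fiber $F$ of $f$, which is a $K3$ surface, is invariant by $\mathcal{G}$; since $\dim F = 2 = \mathrm{rank}\,\mathcal{G}$, invariance forces $T_F = \mathcal{G}_{|F}$, so $F$ is a leaf, and once the general fiber is a leaf lemma \ref{lem:pullbackbyequidimensional} gives that $\mathcal{G}$ is induced by $f$. Let $\omega \in H^0(Y, \Omega_Y^1 \otimes N_\mathcal{G})$ be a twisted $1$-form defining $\mathcal{G}$, and consider its restriction $\omega_{|F} \in H^0(F, \Omega_F^1 \otimes (N_\mathcal{G})_{|F})$. As in the proofs of lemmas \ref{lem:fibrationbysimplyconnectedhypersurfaces} and \ref{lem:divtopoint}, invariance of $F$ is equivalent to the vanishing $\omega_{|F} \equiv 0$, so the whole problem reduces to controlling the line bundle $(N_\mathcal{G})_{|F}$ and the group $H^0(F, \Omega_F^1)$.

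First I would compute $(N_\mathcal{G})_{|F}$. Since $f$ is induced by $-mK_Y$, we have $-mK_Y \cong f^*\mathcal{O}_{\mathbb{P}^1}(1)$, and restricting to a fiber gives $-mK_Y{}_{|F} \cong \mathcal{O}_F$; hence $K_Y{}_{|F}$ is an $m$-torsion element of $\mathrm{Pic}(F)$. Because $F$ is a $K3$ surface, $\mathrm{Pic}(F) = \mathrm{NS}(F)$ is a torsion-free lattice, so $K_Y{}_{|F} \sim 0$. By hypothesis $N_\mathcal{G} \equiv \alpha K_Y$, whence $(N_\mathcal{G})_{|F} \equiv 0$ numerically; and since numerical and linear equivalence of line bundles coincide on a $K3$ surface (its intersection pairing on $\mathrm{NS}(F)$ is non-degenerate and $\mathrm{Pic}^0(F) = 0$), we conclude $(N_\mathcal{G})_{|F} \cong \mathcal{O}_F$.

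With this in hand, $\omega_{|F}$ becomes a section of $\Omega_F^1 \otimes \mathcal{O}_F = \Omega_F^1$. Since $F$ is a $K3$ surface, $h^{1,0}(F) = 0$, so $H^0(F, \Omega_F^1) = 0$ and therefore $\omega_{|F} \equiv 0$. Thus the general fiber $F$ is invariant by $\mathcal{G}$, and applying lemma \ref{lem:pullbackbyequidimensional} to the equidimensional fibration $f$ (whose general fiber is now tangent to $\mathcal{G}$) yields $\mathcal{G} = f^{-1}(\mathcal{H})$ for the rank-zero foliation $\mathcal{H}$ on $\mathbb{P}^1$, i.e.\ $\mathcal{G}$ is induced by $f$.

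The main obstacle I anticipate is precisely the passage from numerical to linear triviality of $(N_\mathcal{G})_{|F}$: the hypothesis only provides $N_\mathcal{G} \equiv \alpha K_Y$ up to numerical equivalence, so I must genuinely invoke the arithmetic of the $K3$ lattice — torsion-freeness of $\mathrm{Pic}(F)$ together with non-degeneracy of its intersection pairing — rather than any a priori linear-equivalence statement. A secondary point to verify is that the general fiber is an honest smooth $K3$ (so that $h^{1,0}=0$ and the lattice facts apply), which is guaranteed by theorem \ref{thm:dimnum1}.
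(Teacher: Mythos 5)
Your proposal is correct and follows essentially the same route as the paper: restrict the defining twisted $1$-form to a general fiber $F$, show $(N_\mathcal{G})_{|F}$ is numerically and hence linearly trivial (the paper invokes $\pi_1(F)=0$ where you spell out torsion-freeness of $\mathrm{Pic}(F)$ and non-degeneracy of the intersection pairing, which is the same content), and conclude from $H^0(F,\Omega_F^1)=0$ that $F$ is invariant. The only cosmetic difference is that you cite lemma \ref{lem:pullbackbyequidimensional} explicitly at the end, which the paper leaves implicit.
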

\begin{proof}
Let $\omega \in H^0(Y,\Omega_Y^1 \otimes N_\mathcal{G})$ define $\mathcal{G}$. Let $F$ be a general fiber of $f$. Then $N_\mathcal{G} \cdot F = 0$, because $f$ is given by $|-mK_Y|$ (hence $-mK_Y\cdot F = 0$). Since $F$ is K3, we have $\pi_1(F) = 0$, and this implies that $(N_\mathcal{G})_{|F} \sim 0$. Thus $\omega_{|F} \in H^0(F,\Omega_F^1)$, and this group is $0$, again by $\pi_1(F) = 0$. We conclude that $\omega_{|F} \equiv 0$, implying that $F$ is tangent to $\mathcal{G}$, for the general $F$, fiber of $f$. This shows that $\mathcal{G}$ is induced by $f$. 
\end{proof}

Finally, we get a contradiction by showing that in this case, since $\mathcal{G}$ is regular, $f$ has to be smooth and hence isotrivial.

\begin{lem}\label{lem:k3fibration}
Let $Y$ be a rationally connected manifold of dimension $3$. Suppose there is a fibration $f \colon Y \rightarrow \mathbb{P}^1$, whose general fiber has numerically trivial canonical bundle, such that $-K_Y = f^*(\mathcal{O}_{\mathbb{P}^1}(1))$. If $\mathcal{G}$ is the foliation induced by $f$, then ${\rm sing}(\mathcal{G}) \neq \emptyset$.
\end{lem}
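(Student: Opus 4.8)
The plan is to argue by contradiction: assume $\mathcal{G}$ is regular and derive a contradiction with the rational connectedness of $Y$. The endgame I am aiming for is to show that $f$ is a \emph{smooth} morphism; once this is known, $f$ has no singular fibers at all, so by Theorem \ref{thm:viehwegzuo} (applied with $F' = F$, since a surface with $K_F \equiv 0$ is already minimal and has $K_{F}$ semi-ample) the family must be birationally isotrivial. Lemma \ref{lem:oguisoviehweg} then provides a finite étale cover $C' \to \mathbb{P}^1$ with $Y \times_{\mathbb{P}^1} C' \cong F \times C'$; as $\mathbb{P}^1$ is simply connected we may take $C' = \mathbb{P}^1$, whence $Y \cong F \times \mathbb{P}^1$. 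This is absurd, because $Y$ would then dominate the non-uniruled surface $F$, contradicting rational connectedness (equivalently, $H^0(Y,\Omega_Y^2) \supseteq H^0(F,\Omega_F^2) \neq 0$). So everything reduces to proving that $f$ is smooth.

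First I would analyze the fibers. Since $\mathcal{G}$ is regular its leaves are pairwise disjoint smooth surfaces, and because $f$ has connected fibers each reduced fiber is a single smooth leaf $F_0'$ with $K_{F_0'} \equiv 0$ (by adjunction $K_{F_0'} = N_{F_0'/Y}$, using $K_Y = -f^*\mathcal{O}_{\mathbb{P}^1}(1)$). Thus the only way $f$ can fail to be smooth is at multiple fibers $f^{-1}(t_i) = m_i F_i'$ with $m_i \geq 2$. I want to stress that such fibers are genuinely compatible with $\mathcal{G}$ being regular: in local coordinates in which the leaves are $\{x = \mathrm{const}\}$ one has $f = x^{m_i}$, so $\mathcal{G}$ is cut out by the nowhere-vanishing form $dx$ while $df = m_i\, x^{m_i - 1}\, dx$ degenerates along $F_i'$. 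Hence multiple fibers cannot be excluded by any local computation and must be ruled out globally.

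To exclude them I would proceed as follows. The canonical bundle formula for the fibration (with $K$-trivial fibers) gives $K_Y \sim_{\mathbb{Q}} f^*\bigl(K_{\mathbb{P}^1} + M + \sum_i (1 - 1/m_i)\, t_i\bigr)$ with $M$ the nef moduli part; comparing degrees with $K_Y = f^*\mathcal{O}_{\mathbb{P}^1}(-1)$ yields $\sum_i (1 - 1/m_i) \leq 1$, so there are at most two multiple fibers and in particular fewer than three singular fibers. Theorem \ref{thm:viehwegzuo} then forces $f$ to be birationally isotrivial, so the monodromy of the smooth family over $U := \mathbb{P}^1 \setminus \{t_i\}$ has finite image, and $m_i$ is the order of the local monodromy around $t_i$. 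Applying Kawamata's covering trick I would take a cyclic cover $C \to \mathbb{P}^1$ whose ramification at each $t_i$ equals $m_i$; then $\tilde Y := (Y \times_{\mathbb{P}^1} C)^{\mathrm{norm}} \to C$ is a smooth family and, crucially, $\tilde Y \to Y$ is étale, because the ramification of the base is absorbed by the fiber multiplicities. Since $Y$ is rationally connected it is simply connected, so this connected étale cover must be trivial, forcing every $m_i = 1$; that is, $f$ is smooth.

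The main obstacle is precisely this third step: the local models show that $\mathcal{G}$ can remain regular across a multiple fiber, so the exclusion must be global, and the delicate points are (i) invoking the nefness of the moduli part in the canonical bundle formula to bound the number of multiple fibers, and (ii) verifying that the covering trick yields an honest \emph{étale} cover of $Y$ across the multiple fibers, so that simple-connectedness of $Y$ can be brought to bear. Once $f$ is smooth, the endgame of the first paragraph applies verbatim and produces $Y \cong F \times \mathbb{P}^1$, contradicting rational connectedness. Therefore $\mathcal{G}$ cannot be regular, i.e. ${\rm sing}(\mathcal{G}) \neq \emptyset$.
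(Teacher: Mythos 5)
Your overall architecture agrees with the paper's: reduce to showing $f$ is smooth (since every fiber is supported on a single smooth leaf, only multiple fibers $m_iF_i''$ can occur, with $K_{F_i''}\equiv 0$ by adjunction), then conclude via Theorem \ref{thm:viehwegzuo} and Lemma \ref{lem:oguisoviehweg} that $Y\cong F\times\mathbb{P}^1$, contradicting rational connectedness. Your observation that multiple fibers are locally compatible with regularity of $\mathcal{G}$, so that a global argument is needed, is also correct. The problem is the global argument you chose. Your bound $\sum_i(1-1/m_i)\le 1$ leaves two possibilities: two multiple fibers with $m_1=m_2=2$, or a \emph{single} multiple fiber of arbitrary multiplicity $m\ge 2$. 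In the first case your cyclic double cover of $\mathbb{P}^1$ branched at $t_1,t_2$ exists and the argument works. In the second case it collapses: there is no cover of $\mathbb{P}^1$ ramified over exactly one point (Riemann--Hurwitz; this is the ``teardrop'' orbifold, which admits no manifold cover), so Kawamata's trick produces nothing, and simple connectedness of $Y$ gives no contradiction either --- a van Kampen computation on a tubular neighbourhood of the single multiple fiber shows $\pi_1(Y)$ can still be trivial (compare the local model $(F\times\Delta)/(\mathbb{Z}/m)$ with $F$ a K3 and $m=2$, whose reduced fiber is an Enriques surface). So the case of one multiple fiber is a genuine gap, and it is not excludable by topology alone.

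The paper closes exactly this gap by a different global mechanism: since $Y$ is rationally connected, $K_Y$ is not pseudo-effective, so $Y$ carries a $K_Y$-negative extremal contraction classified by Theorem \ref{thm:mori}. The cases of a Fano, a del Pezzo fibration, a conic bundle with fibers tangent to $\mathcal{G}$, and a divisorial contraction to a point are eliminated first (using Lemmas \ref{lem:fibrationbysimplyconnectedhypersurfaces}, \ref{lem:conicbundlecase} and \ref{lem:divtopoint}); in the two remaining cases one intersects $-K_Y\sim mF''$ with a curve $\ell$ contracted by the extremal contraction: for a blow-up of a curve, $-K_Y\cdot\ell=1$ forces $m=1$, and for a conic bundle transverse to $\mathcal{G}$, $-K_Y\cdot\ell=2$ forces $m\le 2$, with $m=2$ excluded because $F''\cdot\ell=1$ would make $F''$ birational to the rational base surface, contradicting $K_{F''}\equiv 0$. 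To repair your proof you would need an argument of this kind (or some other non-topological input) for the single-multiple-fiber case; as written, the proposal does not prove the lemma. (A minor additional slip: for $F$ with $K_F\equiv 0$ but $K_F\not\sim 0$, e.g.\ Enriques, $H^0(F,\Omega_F^2)=0$, so your parenthetical justification of the final contradiction should be replaced by the fact that a rationally connected variety cannot dominate a non-uniruled one.)
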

\begin{proof}
Suppose that ${\rm sing(\mathcal{G})} = \emptyset$. Let us consider all the cases for $Y$ in theorem \ref{thm:mori}. First, since $-K_Y = f^*(\mathcal{O}_\mathbb{P}^1(1))$, $Y$ cannot be Fano. If $Y$ admits a fibration by del Pezzo surfaces, with relative Picard number equal to one, then by lemma \ref{lem:fibrationbysimplyconnectedhypersurfaces}, any regular foliation of codimension one on $Y$ is induced by this fibration. If $Y$ is a conic bundle as in theorem \ref{thm:mori}, with fibers tangent to $\mathcal{G}$, then by lemma \ref{lem:conicbundlecase} and theorem \ref{thm:brunella}, any regular foliation of codimension one on $Y$ has rational leaves. Thus, in all these cases we would contradict the hypothesis that the leaves of $\mathcal{G}$ have numerically trivial canonical bundle, and hence they cannot happen.

It remains to treat the cases of divisorial contraction and conic bundle whose fibers are generically transverse to $\mathcal{G}$. By lemma \ref{lem:divtopoint}, if $Y$ admits a divisorial contraction as in theorem \ref{thm:mori}, then the only possible case is that it is a smooth blow-up of a curve. 

Since we are supposing that the foliation induced by $f$ is regular, if $F'$ is a singular fiber of $f$, then there exists a regular surface $F'' \subset Y$, and a positive integer $m > 0$, such that $F' = mF''$. Moreover, since $-K_Y = f^*(\mathcal{O}_{\mathbb{P}^1}(1))$, we have $-K_Y \sim F'$. Thus $K_Y \cdot F' \equiv 0$ and $F'\cdot F' \equiv 0$ (since it is a fiber), and we conclude that $K_Y \cdot F'' \equiv 0$ and $F'' \cdot F'' \equiv 0$. By the adjunction formula, $K_{F''} \equiv 0$.

If $Y$ is the blow-up of a smooth $3$-fold along a curve, then taking $\ell$ a rational curve contracted by it, we have $-K_Y \cdot \ell = 1$. This implies that, for any singular fiber $F' = mF''$ of $f$, we have $mF'' \cdot \ell = 1$, and thus $m = 1$. We conclude, in this case, that $f$ is smooth.

If $\varphi\colon Y \rightarrow S$ is a conic bundle with fibers generically transverse to $\mathcal{F}$, then for any such fiber $\ell$, we have $-K_Y \cdot \ell = 2$. Thus, for any fiber $F' = mF''$, we have $mF'' \cdot \ell = 2$, and hence $m = 1$ or $m = 2$. If $m = 1$ for all fibers, then again $f$ is smooth. Suppose then that $m = 2$ for some fiber $F'$. Thus, in this case, $F'' \cdot \ell = 1$. This implies that the restriction $\varphi_{|F''} \colon F'' \rightarrow S$ is an isomorphism. Since $S$ is rational, this contradicts the fact that $K_{F''} \equiv 0$.

We conclude that in both cases, $f$ has to be smooth. By theorem \ref{thm:viehwegzuo}, $f$ is birationally isotrivial. Thus, by lemma \ref{lem:oguisoviehweg}, since $\mathbb{P}^1$ is simply connected, we conclude that $f$ is trivial, in other words, $Y \cong F \times \mathbb{P}^1$, for $F$ a fiber of $f$, and $f$ is the projection to $\mathbb{P}^1$. Since $Y$ is rationally connected, this implies that $F$ is rational, a contradiction to the fact that $K_F \equiv 0$. Thus, no case of theorem \ref{thm:mori} is possible when $\mathcal{G}$ is regular, which shows that ${\rm sing}(\mathcal{G}) \neq \emptyset$.
\end{proof}

The contradiction to the fact that $\mathcal{G}$ is regular obtained from the last lemma follows from the assumption that $K_\mathcal{G}$ is pseudo-effective. We thus conclude the following, which is the final step to show theorem \ref{thm:main}.

\begin{cor}\label{cor2}
Let $X$ be a rationally connected threefold with $-K_X$ nef. Let $\mathcal{F}$ be a regular foliation of codimension $1$ on $X$. Then $K_\mathcal{F}$ is not pseudo-effective.
\end{cor}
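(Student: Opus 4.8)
\section*{Proof proposal for Corollary \ref{cor2}}

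The plan is to argue by contradiction: suppose $K_\mathcal{F}$ is pseudo-effective and derive a contradiction with the regularity guaranteed by the foliated MMP. First I would run a foliated MMP on $(X,\mathcal{F})$ according to Theorem \ref{thm:spicer}, obtaining a sequence of smooth blow-downs centered at smooth curves $X \rightarrow \dots \rightarrow Y$ with end result a pair $(Y,\mathcal{G})$ in which both $Y$ and $\mathcal{G}$ are regular. Because the contracted curves are tangent to the foliation and $K_\mathcal{F}$ is pseudo-effective, this pseudo-effectivity is inherited by $K_\mathcal{G}$, so the MMP terminates in case (a) of Theorem \ref{thm:spicer}, namely $K_\mathcal{G}$ is nef. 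Since $Y$ is birational to $X$, it is again a rationally connected threefold.

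With $K_\mathcal{G}$ nef and $-K_X$ nef, the hypotheses of Proposition \ref{prop:main} are satisfied. Applying it yields the key numerical facts: $-K_Y$ is nef, $K_Y^2 \equiv 0$, the relation $N_\mathcal{G} \equiv \alpha K_Y$ holds for some $\alpha \in \mathbb{R}$ (extracted from that proof), and $Y$ lies in cases (1) or (2a) of Theorem \ref{thm:mori}. These are exactly the conditions required by Theorem \ref{thm:dimnum1}, which I would invoke next to conclude that $-K_Y$ induces a $K3$-fibration $f \colon Y \rightarrow \mathbb{P}^1$ with $-K_Y \cong f^*(\mathcal{O}_{\mathbb{P}^1}(1))$.

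The final steps chain the last two lemmas. From $N_\mathcal{G} \equiv \alpha K_Y$ and the fact that $f$ is a $K3$-fibration induced by a multiple of $-K_Y$, the lemma immediately preceding Lemma \ref{lem:k3fibration} shows that $\mathcal{G}$ is induced by $f$. At this stage every hypothesis of Lemma \ref{lem:k3fibration} is in place: $Y$ is a rationally connected threefold, $f \colon Y \rightarrow \mathbb{P}^1$ is a fibration whose general fiber is a $K3$ surface and hence has numerically trivial canonical bundle, $-K_Y = f^*(\mathcal{O}_{\mathbb{P}^1}(1))$, and $\mathcal{G}$ is the foliation induced by $f$. The lemma then forces ${\rm sing}(\mathcal{G}) \neq \emptyset$, directly contradicting the regularity of $\mathcal{G}$ produced by the MMP. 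Hence $K_\mathcal{F}$ cannot be pseudo-effective.

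I expect the real difficulty here to be essentially nonexistent at the level of this corollary, since all the substance has been front-loaded into Proposition \ref{prop:main} and Lemma \ref{lem:k3fibration}; the argument is largely bookkeeping that assembles them. The one point that must be checked with care is the alignment of hypotheses across the lemmas: specifically, that the numerical proportionality $N_\mathcal{G} \equiv \alpha K_Y$ coming out of Proposition \ref{prop:main} is precisely what feeds both the statement that $\mathcal{G}$ is induced by $f$ and the subsequent isotriviality contradiction, so that no gap opens between the output of one result and the input of the next.
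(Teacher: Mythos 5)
Your proposal is correct and follows essentially the same route as the paper: reduce via the foliated MMP to the nef case, apply Proposition \ref{prop:main} to get $-K_Y$ nef, $K_Y^2 \equiv 0$ and the proportionality $N_\mathcal{G} \equiv \alpha K_Y$, invoke Theorem \ref{thm:dimnum1} to obtain the $K3$-fibration, and chain the two final lemmas to contradict the regularity of $\mathcal{G}$. Your remark that the proportionality $N_\mathcal{G} \equiv \alpha K_Y$ must be extracted from the \emph{proof} of Proposition \ref{prop:main} rather than its statement is exactly the one hypothesis-alignment point the paper also glosses over, and you handle it correctly.
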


\begin{proof}[Proof of theorem \ref{thm:main}]
Let $X$ be a rationally connected threefold with $-K_X$ nef and let $\mathcal{F}$ be a codimension $1$ regular foliation on $X$. Then, by corollary \ref{cor2}, $K_\mathcal{F}$ is not pseudo-effective, which implies, by corollary \ref{cor1}, that $\mathcal{F}$ is induced by a smooth morphism with rational fibers.
\end{proof}

\bibliographystyle{alpha}
\bibliography{main}
\end{document}